\newtheorem{thm}{Theorem}[section]
\newtheorem{cor}[thm]{Corollary}
\newtheorem{lemma}[thm]{Lemma}
\theoremstyle{definition}
\title{Gradient estimates for $(p,V)$-harmonic functions on Riemannian manifolds}
\author{Yuxin Dong, Hezi Lin, Weihao Zheng}
\date{ }
\begin{document}
\maketitle
\begin{abstract}
In this paper, we study $(p,V)$-harmonic functions on complete Riemannian manifolds using the Moser iteration method. A volume comparison theorem and a Sobolev embedding theorem are established under the Bakry-$\rm\acute{E}$mery curvature condition. Moreover, we obtain an explicit global gradient estimate for positive entire $(p,V)$-harmonic functions.
\end{abstract}
{\bfseries{Keywords}} gradient estimate, $(p,V)$-harmonic function, Moser iteration, Bakry-$\rm\acute{E}$mery curvature

{\bfseries{MSC 2020}} 35B45 $\cdot$ 58J05
\section{Introduction}
\qquad Gradient estimate is one of the most important techniques in geometric analysis, attracting the attention of many mathematicians due to their wide-ranging applications in mathematics, physics, and numerous other fields. It was developed by Yau \cite{YauST75} while studying the Liouville theorem for harmonic functions on complete Riemannian manifolds. His argument was further localized in a joint paper with Cheng \cite{ChengYau75}, which led to a gradient estimate for a broader class of elliptic equations. In 1986, Li and Yau \cite{LiYau86} introduced the renowned Li-Yau type gradient estimate, a parabolic gradient estimate for the solution to the parabolic Schr$\rm\ddot{o}$dinger equation on Riemannian manifolds with Ricci curvature bounded from below. 

\qquad The classical method for obtaining gradient estimates of solutions to elliptic or parabolic equations is the maximum principle. To establish local gradient estimates of solutions on complete manifolds by this method, one needs to apply the relevant second-order differential operator to an auxiliary function constructed by the solution and a cutoff function, where the cutoff function is related to the distance function. Therefore, when studying solutions of equations related to the Laplace operator, the Laplacian comparison theorem of the distance function is often used. For equations involving more complicated operators, such as the $p$-Laplace operator \cite{MaZhu21} or the exponential Laplace operator \cite{JiangMao24}, the Hessian comparison theorem of the distance function is required. Recall that the Hessian comparison theorem is established under the sectional curvature condition. Consequently, Kortschwar and Ni \cite{KotschwarNi09} gave a gradient estimate for $p$-harmonic functions on complete Riemannian manifolds under a sectional curvature condition.

\qquad In 2011, Wang and Zhang \cite{WangZhang11} introduced a Moser iteration technique to derive a Cheng-Yau type gradient estimate for $p$-harmonic functions. Their method only differentiates the distance function once, thus only the Ricci curvature condition is required. Specifically, they established the following gradient estimate for $p$-harmonic functions.
\begin{thm}\label{th1.1}
	{\rm (\cite{WangZhang11})} Let $(M^n,g)$ be a complete Riemannian manifold, and $o\in M$ be a fixed point. Assume the Ricci curvature satisfies $Ric\geq-(n-1)K$ on $B_o(2R)$, $K\geq0$. Suppose $u$ is a positive $p$-harmonic function, i.e., a positive solution to
	\[\Delta_pu=div(\lvert\nabla u\rvert^{p-2}\nabla u)=0.\]
	Then for any $x\in B_o(R)$, there exists a constant $C=C(n,p)$ such that
	\[\frac{\lvert\nabla u\rvert}{u}\leq C\frac{1+\sqrt{K}R}{R}.\]
\end{thm}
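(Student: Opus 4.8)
The plan is to run the Moser iteration scheme: pass to $f=\log u$, derive a reverse Poincaré (Caccioppoli) inequality for powers of $|\nabla f|$ from a Bochner-type identity, and iterate it through the Sobolev embedding theorem already available (taken in its unweighted, $V=0$, form, so that only $\mathrm{Ric}\ge-(n-1)K$ is needed). First I would set $f=\log u$, which is legitimate since $u>0$. A direct computation turns $\Delta_p u=0$ into
\[
\Delta_p f=-(p-1)|\nabla f|^p,
\]
so that $|\nabla u|/u=|\nabla f|$ and the problem reduces to a global bound for $|\nabla f|$ on $B_o(R)$.

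Second, on the open set $\{\nabla f\neq 0\}$ where the operator is nondegenerate, I would apply the generalized Bochner formula associated with the linearized $p$-Laplacian $\mathcal L\psi=\operatorname{div}\!\big(|\nabla f|^{p-2}\nabla\psi+(p-2)|\nabla f|^{p-4}\langle\nabla f,\nabla\psi\rangle\nabla f\big)$ to $|\nabla f|^2$. Inserting the equation to dispose of the resulting $p$-Laplacian term and invoking $\mathrm{Ric}\ge-(n-1)K$ produces a differential inequality in which the curvature contributes a term of size $-(n-1)K|\nabla f|^{p}$. The refined Kato inequality for $p$-harmonic functions is then used to bound the Hessian term $|\nabla^2 f|^2$ from below by a constant depending only on $n,p$ times $\big|\nabla|\nabla f|\big|^2$, turning the Bochner inequality into a coercive inequality for powers of $|\nabla f|$.

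Third, testing this inequality against $|\nabla f|^{2\beta}\phi^2$ for a cutoff $\phi$ supported in $B_o(2R)$ with $|\nabla\phi|\le C/R$, and integrating by parts (again using the equation to handle the $|\nabla f|^p$ terms), I expect a Caccioppoli inequality schematically of the form
\[
\int \big|\nabla\big(|\nabla f|^{1+\beta}\big)\big|^2\phi^2
\ \le\ C(n,p,\beta)\int |\nabla f|^{2+2\beta}\big(|\nabla\phi|^2+K\phi^2\big).
\]
Feeding this into the Sobolev embedding theorem upgrades an $L^q$ bound for $|\nabla f|$ to an $L^{q\chi}$ bound with $\chi=n/(n-2)$, the constant being controlled (via the volume comparison theorem) uniformly in $n,p,K,R$. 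Iterating over $\beta_k\to\infty$ on a nested family of balls, with the product of the resulting constants convergent, yields $\sup_{B_o(R)}|\nabla f|\le C(n,p)\,\frac{1+\sqrt K R}{R}\left(\frac{1}{\mathrm{Vol}(B_o(2R))}\int_{B_o(2R)}|\nabla f|^{p}\right)^{1/p}$, once the curvature contributions in the Caccioppoli step and in the Sobolev constant assemble into the additive $\sqrt K$. Finally, testing the original equation with $\phi^p$ and applying Young's inequality gives the base estimate $\frac{1}{\mathrm{Vol}(B_o(2R))}\int_{B_o(2R)}|\nabla f|^{p}\le C(n,p)R^{-p}$, which closes the iteration and produces the stated bound.

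The hard part will be the degeneracy and nonlinearity of the $p$-Laplacian: the Bochner computation is valid only where $\nabla f\neq 0$, so the integral identities must be justified across the (possibly large) critical set, and the refined Kato inequality must be invoked with the correct $p$-dependent constant. Equally delicate is the bookkeeping that keeps every iteration constant dependent on $n$ and $p$ alone, while isolating the $K$- and $R$-dependence so that it collapses exactly to the factor $(1+\sqrt K R)/R$; in particular one must balance the nonlinear gradient term against the curvature term at each stage of the iteration.
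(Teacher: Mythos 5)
Your overall strategy---substitute a logarithm of $u$, apply the Bochner formula to the linearized $p$-Laplacian, use the frame/Kato-type lower bound on the Hessian, test against powers of the gradient times a cutoff, and run Moser iteration through the Sobolev embedding---is exactly the skeleton of the paper's argument (Section 3 with $V=0$, which is how the paper recovers Theorem \ref{th1.1}; the paper uses $w=-(p-1)\log u$ so that $\Delta_p w=|\nabla w|^p$, handles the degeneracy via $h_\epsilon=\max\{h-\epsilon,0\}$, and uses the Buser/Saloff-Coste Sobolev inequality with $\mu=4n-2$ rather than $n/(n-2)$, but these are cosmetic differences).

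The genuine gap is in your base case and in how the $K$-dependence is assembled. Your proposed starting point, the fixed-exponent bound $\frac{1}{\mathrm{Vol}(B_o(2R))}\int_{B_o(2R)}|\nabla f|^p\le C(n,p)R^{-p}$ obtained by testing the equation with $\phi^p$, is a correct inequality, but iterating from the fixed exponent $q_1=p$ cannot produce the additive $\sqrt{K}$ form. Each iteration step carries a Sobolev/Caccioppoli constant of size $Cq_k^2(1+KR^2)$ raised to the power $1/q_k$, and the product over all steps contributes a factor $(1+KR^2)^{\sum_k 1/q_k}=(1+KR^2)^{n/(2p)}$ (up to constants); for $p<n$ this yields $\sup|\nabla f|\lesssim (1+KR^2)^{n/(2p)}/R$, which is strictly worse than $(1+\sqrt{K}R)/R$ when $KR^2$ is large. (The dimensional inconsistency in your displayed final inequality---$\frac{1+\sqrt{K}R}{R}$ times an averaged $L^p$ norm of $|\nabla f|$ has the dimensions of $(\mathrm{length})^{-2}$, not $(\mathrm{length})^{-1}$---is a symptom of this unresolved bookkeeping.) The paper's mechanism, following Wang--Zhang, is different: the iteration is started at the \emph{large} exponent $b_1\sim b_0=C_0(n,p)(1+\sqrt{K}R)$, so that $\sum_i 1/b_i=\mu/(2b_1)$ is small enough that all accumulated factors, including $e^{Cb_0}$ from the Sobolev constant, collapse to an absolute constant. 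The base bound $\lVert h\rVert_{L^{b_1}(B_o(3R/2))}\le C\,\mathrm{Vol}^{1/b_1}(B_o(2R))\,b_0^2/R^2$ is then \emph{not} deduced from an a priori integral estimate; it is extracted directly from the energy inequality (\ref{3.13}) at level $b=b_0$ by a decomposition/absorption argument (Lemma \ref{le3.2}): on the set where $h>2c_{13}b_0^2/(c_{11}R^2)$ the superlinear term $h^{\frac{p}{2}+b+1}$ on the left absorbs the curvature term $b_0^2h^{\frac{p}{2}+b}$ on the right, and on the complementary set $h$ is already bounded by $Cb_0^2/R^2$. Without this idea (or an equivalent one) your outline does not close to the stated estimate.
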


Clearly, a $2$-harmonic function is simply a harmonic function. We omit $p$ when $p=2$ in the rest of this article.

\qquad Let $V$ be a smooth vector field on $M$. A map $u:M\to X$ is called a $V$-harmonic map if it satisfies
\[\delta d(u)+du(V)=0.\]
The $V$-harmonic map was first introduced and studied in \cite{ChenJostQiu12,ChenJostWang15}, and it is a natural generalization of harmonic maps. Notable examples of $V$-harmonic maps include the Hermitian harmonic map \cite{JostYau93} and the Weyl harmonic map \cite{Kokarev09}.

\qquad Another generalization of harmonic maps and $p$-harmonic functions is the $p$-harmonic map. A map $u:M\to X$ is said to be a $p$-harmonic map for $p>1$ if it satisfies
\[\delta(\lvert du\rvert^{p-2}du)=0.\]
It is derived from the $p$-energy functional as its Euler–Lagrange equation. For a more in-depth discussion of the $p$-harmonic map, one can refer to \cite{WeiSW08}. Using the Moser iteration technique, Dong and Lin \cite{DongLin21} derived a gradient estimate and a Liouville theorem for the $p$-harmonic map in 2021.

\qquad The $(p,V)$-harmonic map is a combination of the $p$-harmonic map and the $V$-harmonic map, which satisfies
\[\delta(\lvert du\rvert^{p-2}du)+\lvert du\rvert^{p-2}du(V)=0.\]
The $(p,V)$-harmonic function is a special case when $X=\mathbb{R}$. In this paper, we will give a gradient estimate for a positive $(p,V)$-harmonic function, i.e., a positive solution to 
\begin{equation}\label{1.1}
	\Delta_{p,V}u=0,
\end{equation}
where $V$ is a smooth vector field.

\begin{thm}\label{th1.2}
	Let $(M^n,g)$ be a complete Riemannian manifold, and $o\in M$ be a fixed point. Assume the Bakry-$\acute{E}$mery curvature satisfies $Ric_V\geq-(n-1)K$ on $B_o(2R)$, $K\geq0$. Suppose $\lvert V\rvert\leq \theta$ in $B_o(2R)$, and $u$ is a positive solution to (\ref{1.1}) in $B_o(2R)$, then for any $x\in B_o(R)$, there exists a constant $C=C(n,p)$ such that
	\[\frac{\lvert\nabla u\rvert}{u}\leq C\frac{1+(\sqrt{K}+\theta)R}{R}.\]
\end{thm}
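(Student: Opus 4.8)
The plan is to mimic the Moser iteration scheme behind Theorem \ref{th1.1}, inserting the drift field $V$ and letting the Bakry--\'Emery tensor $\mathrm{Ric}_V$ play the role that $\mathrm{Ric}$ plays there. First I would remove the positivity normalization by setting $f=\log u$, which is legitimate since $u>0$ on $B_o(2R)$. Writing $\nabla u=u\,\nabla f$ and expanding $\Delta_{p,V}u$, equation \eqref{1.1} becomes
\[
\Delta_p f+(p-1)|\nabla f|^{p}+|\nabla f|^{p-2}\langle\nabla f,V\rangle=0,
\]
so that the quantity to be controlled is exactly $|\nabla u|/u=|\nabla f|$ and the undetermined multiplicative constant in $u$ has disappeared. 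This reduces the theorem to a local $L^\infty$ bound for $|\nabla f|$ on $B_o(R)$.

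The analytic heart is a Bochner-type differential inequality for $|\nabla f|$. On the open set $\Omega=\{\nabla f\neq0\}$, where $\Delta_p$ is (degenerate) elliptic, I would linearize the $p$-Laplacian, obtaining the operator
\[
\mathcal L_f(\psi)=\mathrm{div}\Big(|\nabla f|^{p-2}\big(\nabla\psi+(p-2)\tfrac{\langle\nabla f,\nabla\psi\rangle}{|\nabla f|^{2}}\nabla f\big)\Big),
\]
and combine the classical Bochner formula with the defining identity $\mathrm{Ric}_V=\mathrm{Ric}-\tfrac12\mathcal L_Vg$. The point of passing to $\mathrm{Ric}_V$ is that differentiating the drift term $|\nabla f|^{p-2}\langle\nabla f,V\rangle$ produces a first-order expression $\langle\nabla_{\nabla f}V,\nabla f\rangle$, which is precisely the piece swallowed by $\mathrm{Ric}-\mathrm{Ric}_V$; what remains is a genuine zeroth-order drift bounded by $\theta|\nabla f|^{p-1}$. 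Using $\mathrm{Ric}_V\ge-(n-1)K$, a refined Kato inequality to bound the Hessian term from below by $|\nabla|\nabla f||^{2}$, and Young's inequality on the residual drift, I expect an inequality of the schematic form
\[
\mathcal L_f(|\nabla f|^{p})\ \ge\ c_1\,|\nabla f|^{2p-4}\big|\nabla|\nabla f|\big|^{2}\ -\ c_2\,(K+\theta^{2})\,|\nabla f|^{2p-2}
\]
weakly on $B_o(2R)$, with $c_1,c_2>0$ depending only on $n,p$. Note that $K$ and $\theta$ enter only through the single potential $K+\theta^{2}$, and it is $\sqrt{K+\theta^{2}}\asymp\sqrt K+\theta$ that will ultimately govern the estimate.

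Next I would pass to integral form and iterate. Testing the displayed inequality against $\phi^{2}|\nabla f|^{\alpha}$, for a cutoff $\phi$ supported in $B_o(2R)$ with $\phi\equiv1$ on $B_o(R)$ and $|\nabla\phi|\le C/R$, and integrating by parts, yields a reverse-Poincar\'e inequality controlling the local energy of $|\nabla f|^{\beta}$ by $\int\big(|\nabla\phi|^{2}+\phi^{2}(K+\theta^{2})\big)|\nabla f|^{2\beta}$. Feeding this into the Sobolev embedding theorem established above (valid under $\mathrm{Ric}_V\ge-(n-1)K$) gains a fixed integrability exponent at each stage, and running the Moser iteration as $\beta\to\infty$, with the volume comparison theorem keeping all iteration constants dependent only on $n,p$, bounds $\sup_{B_o(R)}|\nabla f|$ by a multiple of $\big(\tfrac{1}{\mathrm{Vol}(B_o(2R))}\int_{B_o(2R)}|\nabla f|^{p}\big)^{1/p}$. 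A separate Caccioppoli estimate, obtained by testing \eqref{1.1} directly against $\phi^{p}$ and using $|V|\le\theta$ together with Young's inequality, bounds this average by $C(n,p)(R^{-1}+\theta)$. Careful bookkeeping of these two estimates should then produce the stated bound $\tfrac{|\nabla u|}{u}\le C\,\tfrac{1+(\sqrt K+\theta)R}{R}$.

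The step I expect to be the main obstacle is the Bochner inequality on $\Omega$ and the constant tracking it drives. One must handle the degeneracy of $\Delta_p$ where $\nabla f$ vanishes, by working on $\Omega$ and justifying the weak formulation across $\partial\Omega$, and invoke the refined Kato inequality to keep the coefficient $c_1$ strictly positive for the relevant range of $p$. Most delicately, the curvature and drift contributions must be organized so that they coalesce into the single potential $K+\theta^{2}$ rather than into separate, multiplicatively compounding factors; it is precisely this coalescence, together with the absorption of $\tfrac12\mathcal L_Vg$ into $\mathrm{Ric}_V$, that allows $\sqrt K$ and $\theta$ to appear additively, and with the correct scaling in $R$, in the final estimate.
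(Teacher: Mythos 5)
Your overall architecture matches the paper's: pass to a logarithm of $u$, derive a Bochner-type differential inequality on the set where the gradient does not vanish with $Ric_V$ absorbing the derivative of $V$, put it in weak form, and run Moser iteration using the Sobolev embedding of Lemma \ref{le2.6} and the volume comparison. However, there is a genuine gap at the step where you seed the iteration. Your schematic Bochner inequality
\begin{equation*}
\mathcal L_f(|\nabla f|^{p})\ \ge\ c_1\,|\nabla f|^{2p-4}\big|\nabla|\nabla f|\big|^{2}\ -\ c_2\,(K+\theta^{2})\,|\nabla f|^{2p-2}
\end{equation*}
omits the essential \emph{positive} top-order term: using the equation itself together with the trace inequality $\sum_{i\ge2}w_{ii}^2\ge\frac{1}{n-1}(\sum_{i\ge2}w_{ii})^2$, the paper's inequality (\ref{3.6}) contains the extra term $\frac{2}{2n-1}h^{\frac p2+1}$ (in your normalization, a term of order $|\nabla f|^{2p}$). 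This term is not decoration: in Lemma \ref{le3.2} it is what allows the bad term $b_0^2 b\,h^{\frac p2+b}$ on the right of (\ref{3.13}) to be absorbed on the region $\{h>2c_{13}b_0^2/(c_{11}R^2)\}$, producing directly an $L^{b_1}$ bound for $h$ with $b_1\sim b_0\sim 1+(\sqrt K+\theta)R$. Without it you are forced, as you propose, to start the iteration from the $L^{p}$ average supplied by a Caccioppoli estimate.

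That replacement does not deliver the stated constant. The Sobolev constant in Lemma \ref{le2.6} carries the factor $\exp\bigl(C(\sqrt K+\theta)r\bigr)$, so the accumulated constant in the iteration is $\prod_i(\exp(c b_0)\cdots)^{1/b_i}=\exp\bigl(c b_0\sum_i 1/b_i\bigr)\cdots=\exp\bigl(c b_0\,\mu/(2b_1)\bigr)\cdots$. This is a dimensional constant only when the starting exponent $b_1$ is at least comparable to $b_0=C(1+(\sqrt K+\theta)R)$; if $b_1=p$ is fixed, the iteration constant is $\exp\bigl(C(\sqrt K+\theta)R\bigr)$, and your final bound degrades to $\frac{|\nabla u|}{u}\le C\exp\bigl(C(\sqrt K+\theta)R\bigr)/R$ rather than the linear $C\bigl(1+(\sqrt K+\theta)R\bigr)/R$. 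So your claim that ``the volume comparison theorem keeps all iteration constants dependent only on $n,p$'' is where the argument fails. To repair it, retain the full Hessian lower bound (\ref{3.5.2}) --- including the $\frac{h^2}{2n-1}$ piece coming from the equation, corrected by $-\frac{\langle V,\nabla w\rangle^2}{n}$ which is why $\theta^2/n$ appears next to $(n-1)K$ --- and use the resulting $h^{\frac p2+b+1}$ term to obtain the $L^{b_1}$ seed estimate of Lemma \ref{le3.2} before iterating. Your remaining points (handling the degeneracy via truncation near $\{\nabla f=0\}$, and the additive appearance of $\sqrt K$ and $\theta$ through the single potential $K+\theta^2$) are consistent with the paper.
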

\qquad It can be seen that our result recaptures that in \cite{WangZhang11} by letting $V=0$. When $V=-\nabla f$ is a gradient vector field, the $(p,V)$-harmonic function becomes the $(p,f)$-harmonic function, which satisfies
\begin{equation*}
	\begin{aligned}
		\Delta_{p,f}u= e^fdiv(e^{-f}\lvert\nabla u\rvert^{p-2}\nabla u)=0. 
	\end{aligned}
\end{equation*}
The $f$-Laplace operator $\Delta_f=\Delta-\langle \nabla\ ,\nabla f\rangle$ is the self-adjoint operator of the weighted volume $e^{-f}dvol_g$, thus the operator $\Delta_{p,f}$  can be expressed in divergence form. However, the $(p,V)$-Laplace operator can not be transformed into divergence form when $V$ is an arbitrary vector field. This provides the main obstacle of the integration-based Moser iteration method. The method we use to deal with it is inspired by \cite{SongWuZhu23,SongWuZhu25}.

\qquad From Theorem \ref{th1.2} we can obtain the following Harnack inequality as an application.
\begin{cor}\label{cor1.3}
	Let $(M^n,g)$ be a complete Riemannian manifold, and $o\in M$ be a fixed point. Assume the Bakry-$\acute{E}$mery curvature satisfies $Ric_V\geq-(n-1)K$ on $B_o(2R)$, $K\geq0$. Suppose $\lvert V\rvert\leq \theta$ in $B_o(2R)$, and $u$ is a positive solution to (\ref{1.1}) in $B_o(2R)$. Then for any $x,y\in B_o(R)$, there exists a constant $C=C(n,p)$ such that
	\[u(x)\leq u(y)exp\bigl(C(1+(\sqrt{K}+\theta)R)\bigr).\]
\end{cor}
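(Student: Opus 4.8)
The plan is to deduce the Harnack inequality directly from the pointwise gradient estimate of Theorem \ref{th1.2} by integrating $\lvert\nabla\log u\rvert$ along a suitable path joining $x$ and $y$. First I would rewrite the conclusion of Theorem \ref{th1.2} in the logarithmic form
\[
\lvert\nabla\log u\rvert=\frac{\lvert\nabla u\rvert}{u}\leq C\,\frac{1+(\sqrt{K}+\theta)R}{R}\quad\text{on }B_o(R),
\]
which is available because $u$ is a positive solution to (\ref{1.1}) on $B_o(2R)$ and the curvature and vector-field hypotheses are exactly those of Theorem \ref{th1.2}.

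Given $x,y\in B_o(R)$, the idea is to write $\log u(x)-\log u(y)$ as the line integral of $\nabla\log u$ along a curve from $y$ to $x$, and to bound it by the length of the curve times the supremum of $\lvert\nabla\log u\rvert$. The single point requiring care is that the estimate above is only known on $B_o(R)$, while the minimizing geodesic joining $x$ and $y$ directly may leave $B_o(R)$. To avoid this I would route the path through the center $o$: let $\gamma_1$ be a unit-speed minimizing geodesic from $o$ to $x$ and $\gamma_2$ one from $o$ to $y$. Every point of $\gamma_1$ has distance to $o$ at most $d(o,x)<R$, and likewise for $\gamma_2$, so both geodesics stay inside $B_o(R)$, precisely where the gradient bound holds.

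Integrating along $\gamma_1$ and using $\lvert\dot\gamma_1\rvert=1$ gives
\[
\lvert\log u(x)-\log u(o)\rvert\leq\Bigl(\sup_{B_o(R)}\lvert\nabla\log u\rvert\Bigr)d(o,x)\leq C\bigl(1+(\sqrt{K}+\theta)R\bigr),
\]
and the analogous bound with $y$ in place of $x$, using $d(o,x),d(o,y)<R$. Adding the two estimates and applying the triangle inequality yields
\[
\log u(x)-\log u(y)\leq 2C\bigl(1+(\sqrt{K}+\theta)R\bigr).
\]
Exponentiating and absorbing the factor $2$ into the constant produces the asserted inequality $u(x)\leq u(y)\exp\bigl(C(1+(\sqrt{K}+\theta)R)\bigr)$. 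Since the argument is merely an elementary integration of an already-established pointwise bound, there is no genuine analytic obstacle; the only thing to watch is the choice of path, which must stay within $B_o(R)$ so that Theorem \ref{th1.2} can be applied at every point.
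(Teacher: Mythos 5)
Your proposal is correct, and it is the standard integration argument that the paper implicitly relies on (the paper states Corollary \ref{cor1.3} as an immediate application of Theorem \ref{th1.2} without writing out a proof). Routing the path through the center $o$ so that it stays inside $B_o(R)$, where the gradient bound of Theorem \ref{th1.2} is available, is exactly the right care to take, and the rest is the elementary estimate $\lvert\log u(x)-\log u(o)\rvert\leq d(o,x)\sup_{B_o(R)}\lvert\nabla\log u\rvert$ followed by exponentiation.
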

\qquad A global estimate can be got from Theorem \ref{th1.2} by letting $R\to\infty$. Moreover, with the method inspired by \cite{HanWinterWang25} and \cite{SungAnnaWang14}, we get an explicit global estimate on the complete noncompact Riemannian manifold without boundary.
\begin{thm}\label{th1.4}
	Let $(M^n.g)$ be a complete Riemannian manifold without boundary. Assume the Bakry-$\acute{E}$mery curvature satisfies $Ric_V\geq -(n-1)K$ on $M$, and $\lvert V\rvert\leq\theta$. Suppose $u$ is a positive solution to (\ref{1.1}) on $M$, then for any $x\in M$ we have
	\begin{equation*}
		\frac{\lvert\nabla u\rvert}{u}\leq\sqrt{(2n-1)\bigl((n-1)K+\frac{\theta^2}{n}\bigr)}.
	\end{equation*}
\end{thm}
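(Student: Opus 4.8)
The plan is to bound the scale-invariant quantity $\sigma:=\lvert\nabla u\rvert^{2}/u^{2}$ by passing to $w:=\log u$ and running a Bochner argument that is closed up not at an interior maximum but at the supremum, via a maximum principle at infinity. First I would record that $w$ satisfies, on the set $\{\nabla w\neq 0\}$, the equation obtained by dividing (\ref{1.1}) by $u^{p-1}\lvert\nabla w\rvert^{p-2}$, namely
\[
\Delta w+(p-2)\frac{\nabla^{2}w(\nabla w,\nabla w)}{\lvert\nabla w\rvert^{2}}+\langle V,\nabla w\rangle=-(p-1)\lvert\nabla w\rvert^{2},
\]
so that $\Delta_{p,V}w=-(p-1)\lvert\nabla w\rvert^{p}$ and $\sigma=\lvert\nabla w\rvert^{2}$. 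The theorem is then equivalent to the global bound $\sigma\leq(2n-1)\bigl((n-1)K+\theta^{2}/n\bigr)$.

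The analytic core is a Bochner identity for the operator $\mathcal{L}$ that linearizes $\Delta_{p,V}$ at $w$, i.e.\ $\mathcal{L}(\varphi)=\operatorname{div}\bigl(\lvert\nabla w\rvert^{p-2}(\nabla\varphi+(p-2)\lvert\nabla w\rvert^{-2}\langle\nabla w,\nabla\varphi\rangle\nabla w)\bigr)+\lvert\nabla w\rvert^{p-2}\langle V,\nabla\varphi\rangle$. Using $\mathcal{L}$ rather than the rough Laplacian is essential: it is precisely the operator for which the third-order derivatives of $w$ reorganize into a term proportional to $\langle\nabla w,\nabla\Delta_{p,V}w\rangle$, which by the equation is a multiple of $\langle\nabla w,\nabla\sigma\rangle$ and therefore harmless at the supremum. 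Carrying out this computation and inserting the Bakry-\'Emery bound $Ric_{V}\geq-(n-1)K$, which enters the curvature term directly so that neither a Hessian nor a Laplacian comparison of the distance function is needed, I would obtain a differential inequality of the schematic shape $\tfrac12\mathcal{L}\sigma\geq\lvert\nabla w\rvert^{p-2}\bigl(\lvert\nabla^{2}w\rvert^{2}-(n-1)K\sigma\bigr)+(\text{terms linear in }\nabla\sigma)$.

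Since $M$ is noncompact, $\sigma$ need not attain its supremum, so I would first invoke Theorem \ref{th1.2} with $R\to\infty$ to secure the a priori bound $\sup_{M}\sigma<\infty$, and then apply a maximum principle at infinity. Choosing $x_{k}$ with $\sigma(x_{k})\to\sup_{M}\sigma$ and localizing $\sigma$ against cutoffs whose first and weighted second derivatives are controlled by the weighted Laplacian comparison valid under $Ric_{V}\geq-(n-1)K$ and $\lvert V\rvert\leq\theta$, all cutoff error terms and the transport terms are $o(1)$ because $\sigma$ is bounded; moreover $\nabla\sigma\to0$ forces the gradient-direction Hessian component, hence the $(p-2)$ contribution, to vanish in the limit. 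What survives is the clean pointwise inequality $\lvert\nabla^{2}w\rvert^{2}\leq(n-1)K\,\sup_{M}\sigma$ together with the reduced equation $\Delta w+\langle V,\nabla w\rangle=-(p-1)\sigma$.

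To finish I would combine a Kato-type lower bound for $\lvert\nabla^{2}w\rvert^{2}$ with the reduced equation, estimate the drift by $\lvert\langle V,\nabla w\rangle\rvert\leq\theta\sqrt{\sigma}$, and absorb it by Young's inequality so that the mixed term contributes exactly $\theta^{2}/n$; optimizing the resulting algebraic inequality produces the factor $2n-1$ and the asserted bound. I expect the main obstacle to be twofold. The Bochner step is delicate because $\Delta_{p,V}$ is neither in divergence form nor uniformly elliptic, so one must track the $(p-2)$ Hessian terms through $\mathcal{L}$ and argue separately on $\{\nabla w=0\}$, where $\sigma$ vanishes and the claim is trivial but $\mathcal{L}$ degenerates. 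The more serious difficulty, and the point where the ideas of \cite{HanWinterWang25} and \cite{SungAnnaWang14} are indispensable, is to arrange the limit in the maximum principle at infinity and the final optimization so that the resulting explicit constant is uniform in $p>1$, rather than degenerating as $p\to1^{+}$.
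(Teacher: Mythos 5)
Your setup — the logarithmic substitution, the linearized operator $\mathcal{L}=\mathfrak{L}$, the Bochner formula in which $Ric_V$ enters directly, and the refined Hessian lower bound whose optimization over the splitting of $(\sum_{i\ge2}w_{ii})^2$ produces exactly the constants $2n-1$ and $\theta^2/n$ — coincides with the paper's Lemmas \ref{le4.1} and \ref{le4.2}, and the algebra at a putative maximum does reproduce the stated bound. The gap is in how you close the argument. You propose a pointwise maximum principle at infinity: pick $x_k$ with $\sigma(x_k)\to\sup\sigma$, localize $\sigma$ against cutoffs, and claim the cutoff errors are controlled ``by the weighted Laplacian comparison.'' But the operator you must apply to the localization is $\mathcal{L}$, not $\Delta_V$: because of the $(p-2)\,dw\otimes dw/\lvert\nabla w\rvert^{2}$ term, $\mathcal{L}\eta$ involves $Hess\,\eta(\nabla w,\nabla w)$, i.e.\ the full Hessian of the distance function, not merely its trace. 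Under the hypotheses $Ric_V\geq-(n-1)K$, $\lvert V\rvert\leq\theta$ one has a Laplacian comparison (Theorem \ref{th2.1}) but no Hessian comparison — that would require a sectional curvature bound, which is precisely the hypothesis this line of work (Wang--Zhang onward) is designed to avoid. So the step ``all cutoff error terms are $o(1)$'' cannot be justified pointwise, and an Omori--Yau-type principle for the degenerate, $w$-dependent operator $\mathcal{L}$ is not available off the shelf under these assumptions.

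The paper circumvents exactly this obstruction by never leaving divergence form: it sets $H=\bigl(h-(2n-1)((n-1)K+\tfrac{\theta^2}{n})-\delta\bigr)^{+}$, proves $\mathfrak{L}(H^{\alpha})\geq 2\alpha H^{\alpha-1}(d_1H-d_2\lvert\nabla H\rvert)$ as a distributional inequality (Lemma \ref{le4.3}), tests it against $H^{\sigma}\eta^{2}$ so that only $\nabla\eta$ — never $Hess\,\eta$ — appears, and iterates over the balls $B_o(i)$ to show that $H\not\equiv0$ would force super-exponential volume growth, contradicting Theorem \ref{th2.2}. If you want to rescue your pointwise scheme you would need a weak maximum principle for $\mathcal{L}$ valid under volume-growth hypotheses alone, and proving that amounts to running the same integral argument; so as written the proposal has a genuine missing step at its crux, even though the pointwise algebra that would follow from it is correct.
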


\qquad  The remainder of this paper is organized as follows. Section 2 provides a brief introduction to several concepts related to $(p,V)$-harmonic functions and the Bakry-Emery curvature, etc. This is followed by a derivation of the Sobolev embedding theorem, which is crucial for the Moser iteration technique. Section 3 derives the local gradient estimate via the Moser iteration method. We give the proof of Theorem \ref{th1.4} in section 4.
\section{Preliminaries}
\subsection{$(p,V)$-harmonic function and Bakry-$\bf\acute{E}$mery curvature}
\qquad Let $(M^n,g)$ be a complete Riemannian manifold. For any $p>1$, a $p$-harmonic function is the solution to
\begin{equation*}
	\begin{aligned}
		0=\Delta_p u=&div(\lvert\nabla u\rvert^{p-2}\nabla u)\\
		=&\lvert\nabla u\rvert^{p-2}\Delta u+(\frac{p}{2}-1)\lvert\nabla u\rvert^{p-4}\langle \nabla u,\nabla\lvert\nabla u\rvert^2\rangle,
	\end{aligned}
\end{equation*}
where $\langle\ ,\ \rangle$ denotes the inner product of two vectors. The $p$-harmonic function is a natural generalization of the harmonic function, and it is the critical point of the $p$-energy functional 
\[E_p(u)=\frac{1}{p}\int\lvert\nabla u\rvert^p dvol_g,\]
where $dvol_g$ is the volume element introduced by $g$.

\qquad Let $V$ be a smooth vector field on $M$. The $(p,V)$-harmonic function is the solution to
\begin{equation}\label{2.1.1}
	\begin{aligned}
		0=\Delta_{p,V}u=&\Delta_pu+\lvert \nabla u\rvert^{p-2}\langle V,\nabla u\rangle\\
		=&\lvert\nabla u\rvert^{p-2}\Delta u+(\frac{p}{2}-1)\lvert\nabla u\rvert^{p-4}\langle \nabla u,\nabla\lvert\nabla u\rvert^2\rangle+\lvert \nabla u\rvert^{p-2}\langle V,\nabla u\rangle.
	\end{aligned}
\end{equation}
When $V$ is a gradient vector field, i.e., there exists a smooth function $f$ such that $V=-\nabla f$, (\ref{2.1.1}) becomes
\begin{equation}\label{2.1.2}
	\begin{aligned}
			0=\Delta_{p,f} u=&e^fdiv(e^{-f}\lvert\nabla u\rvert^{p-2}\nabla u)\\
		=&\lvert\nabla u\rvert^{p-2}\Delta u-\lvert\nabla u\rvert^{p-2}\langle\nabla f,\nabla u\rangle+(\frac{p}{2}-1)\lvert\nabla u\rvert^{p-4}\langle \nabla u,\nabla\lvert\nabla u\rvert^2\rangle.
	\end{aligned}
\end{equation}
A solution of (\ref{2.1.2}) is called a $(p,f)$-harmonic function. Zhao and Yang \cite{ZhaoYang18} got a gradient estimate for a $(p,f)$-Lichnerowicz equation through the Moser iteration method in 2018. For recent studies involving $(p,f)$-Laplace operator or $(p,V)$-Laplace operator, see \cite{DaiDungTuyenZhao22,WangLF18,WangYangChen13} and the references therein.

\qquad The Bakry-$\rm\acute{E}$mery curvature, named after Bakry and $\rm\acute{E}$mery \cite{BakryEmery85}, plays a vital role in the analysis of the $V$-Laplace operator. Following the notation established in \cite{ChenJostQiu12}, we define the Bakry-$\rm\acute{E}$mery curvature by
\[Ric_V=Ric-\frac{1}{2}L_Vg,\]
where $Ric$ is the Ricci curvature and $L_Vg$ is the Lie derivative with respect to the vector field $V$. 

\subsection{Laplacian and volume comparison theorems}
\qquad In this subsection, we will derive a Laplacian comparison theorem and a volume comparison theorem under the Bakry-$\rm\acute{E}$mery curvature condition. Our method is inspired by \cite{SongWuZhu25,WeiWylie09}. An alternative approach using Jacobi fields can be found in \cite{ChenJostQiu12}.
\begin{thm}\label{th2.1}
	(Laplacian comparison) Let $(M^n,g)$ be a complete Riemannian manifold, and $o\in M$ be a fixed point. Let $r(x)=d (x,o)$ be the distance function between $x$ and $o$. Assume the Bakry-$\acute{E}$mery curvature satisfies $Ric_V\geq-(n-1)K$ on $B_o(2R)$, $K\geq0$. Suppose $\lvert V\rvert\leq \theta$ in $B_o(2R)$. Then in $B_o(2R)$ we have
	\begin{equation}\label{2.2.1}
		\Delta r\leq (n-1)\sqrt{K}+\frac{n-1}{r}+2\theta.
	\end{equation}
\end{thm}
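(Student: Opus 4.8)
\emph{Proof proposal.} The plan is to run a one-dimensional Riccati comparison along minimizing geodesics emanating from $o$, adapted to the Bakry-$\rm\acute{E}$mery setting. Fix $x\in B_o(2R)$ outside the cut locus of $o$, let $\gamma$ be the unit-speed minimizing geodesic from $o$ to $x$, write $\partial_r=\nabla r$, and denote by ${}'$ differentiation along $\gamma$. Applying the Bochner formula to $r$ and using $|\nabla r|\equiv 1$ gives $0=|\mathrm{Hess}\,r|^2+(\Delta r)'+Ric(\nabla r,\nabla r)$. The two standard inputs are the Cauchy--Schwarz bound $|\mathrm{Hess}\,r|^2\ge(\Delta r)^2/(n-1)$, valid since $\mathrm{Hess}\,r(\nabla r,\cdot)=0$, and the decomposition $Ric=Ric_V+\tfrac12 L_Vg$. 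The crucial observation is that $\tfrac12(L_Vg)(\partial_r,\partial_r)=\langle\nabla_{\partial_r}V,\partial_r\rangle=\tfrac{d}{dr}\langle V,\nabla r\rangle$ is a total derivative along $\gamma$, because $\nabla_{\partial_r}\partial_r=0$. Combining these with $Ric_V\ge-(n-1)K$ yields the scalar Riccati inequality
\[ \bigl(\Delta r+\langle V,\nabla r\rangle\bigr)'+\frac{(\Delta r)^2}{n-1}\le(n-1)K. \]

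Writing $M:=\Delta r+\langle V,\nabla r\rangle$, the difficulty is that the quadratic term involves $(\Delta r)^2$ while the differentiated quantity is $M$; the vector field thus breaks the self-contained Riccati structure, and the whole game is to reconcile the two while spending the bound $|\langle V,\nabla r\rangle|\le\theta$ as few times as possible. I would resolve this by a shift. On the sub-arc where $M\ge\theta$ one has $\Delta r=M-\langle V,\nabla r\rangle\ge M-\theta\ge 0$, hence $(\Delta r)^2\ge(M-\theta)^2$, so that $N:=M-\theta$ satisfies the genuine, $V$-free Riccati inequality $N'+N^2/(n-1)\le(n-1)K$. On the complementary region $M<\theta$ one simply notes $\Delta r=M-\langle V,\nabla r\rangle<2\theta$, which already lies below the claimed bound.

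I would then invoke the classical scalar comparison for $N$. As $r\to 0^+$ one has $N\sim(n-1)/r\to+\infty$, the $\langle V,\nabla r\rangle$ and $\theta$ terms being bounded, while the model solution $\bar N=(n-1)\sqrt{K}\coth(\sqrt{K}r)$ has the same blow-up; Sturm comparison then gives $N\le\bar N$ on the good arc, and on a sub-arc issuing from a crossing point $M=\theta$ one compares instead with $(n-1)\sqrt{K}\tanh$, which is even smaller. In all cases $N(r)\le(n-1)\sqrt{K}\coth(\sqrt{K}r)$, and the elementary estimate $\coth t\le 1+1/t$ upgrades this to $N\le(n-1)\sqrt{K}+(n-1)/r$. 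Unwinding the shift, $\Delta r=M-\langle V,\nabla r\rangle\le(N+\theta)+\theta\le(n-1)\sqrt{K}+(n-1)/r+2\theta$, which is exactly (\ref{2.2.1}); the two copies of $\theta$ enter once through the shift $N=M-\theta$ and once through passing from $M$ back to $\Delta r$.

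The step I expect to be the main obstacle is precisely the mismatch between $(\Delta r)^2$ and $M^2$ produced by Cauchy--Schwarz: obtaining the sharp constant $2\theta$ rather than a larger multiple of $\theta$ hinges on the casework above together with the recognition of the $L_Vg$ contribution as a total derivative. Two points remain routine and I would only remark on them: the inequality is first established where $r$ is smooth and then extended across the cut locus in the barrier sense via Calabi's trick, and the case $K=0$ is recovered by letting $\sqrt{K}\to 0$, whereupon $\coth(\sqrt{K}r)/\sqrt{K}\to 1/r$ and the bound reads $\Delta r\le(n-1)/r+2\theta$.
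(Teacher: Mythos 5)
Your proposal is correct and reaches the same constant $2\theta$ as the paper, but it closes the comparison argument by a different mechanism, so a comparison is worth recording. Both proofs start identically: Bochner applied to $r$, the Cauchy--Schwarz bound $\lvert Hess\,r\rvert^2\ge(\Delta r)^2/(n-1)$, the decomposition $Ric=Ric_V+\tfrac12 L_Vg$, and --- this is the shared key observation --- the fact that $\tfrac12(L_Vg)(\partial_r,\partial_r)=\tfrac{d}{dr}\langle V,\partial_r\rangle$ is a total derivative along the radial geodesic. From there the routes diverge. The paper subtracts the model mean curvature $m_{-K}$, multiplies by the integrating factor $sn_{-K}^2$ to kill the quadratic term, integrates from $0$ to $r$, and then handles the $L_Vg$ contribution by integrating by parts against $sn_{-K}^2$; the bound $\lvert\langle V,\partial_r\rangle\rvert\le\theta$ together with $(sn_{-K}^2)'\ge 0$ delivers $m\le m_{-K}+2\theta$ in one stroke, with no case distinction. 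You instead fold $\langle V,\nabla r\rangle$ into the differentiated quantity $M=\Delta r+\langle V,\nabla r\rangle$, shift by $\theta$, and split into the regimes $M\ge\theta$ (where $N=M-\theta$ obeys the genuine $V$-free Riccati inequality and Sturm comparison with $(n-1)\sqrt{K}\coth(\sqrt{K}r)$, or with the smaller $\tanh$-solution on arcs issuing from a crossing point) and $M<\theta$ (where $\Delta r<2\theta$ trivially). Your casework is sound --- the initial comparison at $r\to0^+$ works because $N-\bar N\to\langle V(o),\gamma'(0)\rangle-\theta\le 0$, and each maximal interval of $\{M\ge\theta\}$ has the right boundary data --- and it has the merit of reducing everything to the classical scalar Riccati comparison with no integrating factor; the price is the bookkeeping over sub-arcs, which the paper's integration by parts avoids entirely. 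Your remark on extending across the cut locus by Calabi's barrier trick addresses a point the paper leaves implicit.
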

\begin{proof}
	By applying the Bochner formula to the distance function $r(x)$, we get
	\begin{equation*}
		\begin{aligned}
				0=\frac{1}{2}\Delta\lvert\nabla r\rvert^2=&\lvert Hessr\rvert^2+\langle\nabla r,\nabla\Delta r\rangle+Ric(\nabla r,\nabla r)\\
				=&\lvert Hessr\rvert^2+(\Delta r)'+Ric(\partial r,\partial r),
		\end{aligned}
	\end{equation*}
	where $Hessr$ is the Hessian of $r$ and the derivative is taken with respect to $r$. It is known that $\Delta r$ is the mean curvature $m$ of $\partial B_o(r)$, therefore we have
	\begin{equation*}
		m'\leq-\frac{m^2}{n-1}-Ric(\partial r,\partial r).
	\end{equation*}
	Let $M_{-K}$ be the $n$-dimensional model space with constant sectional curvature $-K$, with its mean curvature denoted by $m_{-K}$. From the property of the model space we get
	\begin{equation*}
		m_{-K}'=-\frac{m_{-K}^2}{n-1}+(n-1)K.
	\end{equation*} 
	Then we have
	\begin{equation*}
		m'-m_{-K}'\leq-\frac{m^2-m_{-K}^2}{n-1}-\frac{1}{2}L_Vg.
	\end{equation*}
	Denoting
	\begin{equation*}
		sn_{-K}(r)=
		\begin{cases}
			r,\ if\ K=0,\\
			\frac{sinh(\sqrt{K}r)}{\sqrt{K}},\ if\ K>0.
		\end{cases}
	\end{equation*}
	Then a direct computation yields
	\begin{equation*}
		\frac{sn_{-K}'}{sn_{-K}}=\frac{m_{-K}}{n-1},
	\end{equation*}
	and
	\begin{equation}\label{2.2.2}
		\begin{aligned}
			\bigl(sn_{-K}^2(m-m_{-K})\bigr)'=&2sn_{-K}'sn_{-K}(m-m_{-K})+sn_{-K}^2(m-m_{-K})'\\
			\leq&sn_{-K}^2\bigl(-\frac{(m-m_{-K})^2}{n-1}-\frac{1}{2}L_Vg\bigr)\\
			\leq&-\frac{1}{2}sn_{-K}^2L_Vg.
		\end{aligned}
	\end{equation}
	Let $\gamma$ be a unit speed geodesic from $o$ to $x$ with $\gamma(0)=o$. By integrating (\ref{2.2.2}) from $0$ to $r$ we obtain 
	\begin{equation}\label{2.2.3}
		sn_{-K}^2(r)\bigl(m(r)-m_{-K}(r)\bigr)\leq-\frac{1}{2}\int_{0}^{r} sn_{-K}^2(t)L_Vg(\nabla\gamma(t),\nabla \gamma(t))dt.
	\end{equation}
	Choosing a parallel frame $\{e_i\}$ along $\gamma$ such that $e_1=\nabla \gamma$, then we have (see \cite{ChenQiu16})
	\begin{equation}\label{2.2.4}
		L_Vg(\nabla\gamma,\nabla\gamma)=2V_1^1.
	\end{equation}
	Substituting (\ref{2.2.4}) into (\ref{2.2.3}), with the integration by parts formula we get 
	\begin{equation*}
		\begin{aligned}
			sn_{-K}^2(r)\bigl(m(r)-m_{-K}(r)\bigr)\leq&-sn_{-K}^2(r)V^1(\gamma(r))+sn_{-K}^2(0)V^1(\gamma(0))\\
			&+\int_{0}^{r}(sn_{-K}^2(t))'V^1(\gamma(t))dt.
		\end{aligned}
	\end{equation*}
	Then from the definition of $sn_{-K}$ and the assumption of $V$, by integration by parts we have
	\begin{equation*}
		m(r)\leq m_{-K}(r)+2\theta.
	\end{equation*}
	From the definition of $sn_{-K}$ we know the inequality (\ref{2.2.1}) holds.
\end{proof}
\qquad Under the spherical coordinate centered at $o$, it is known that
\begin{equation*}
	m(r)=\Delta r=\frac{J'(r,\psi,o)}{J(r,\psi,o)},
\end{equation*}
where $J$ is the volume element. By integration we have the volume comparison theorem. We omit the detailed proof since it is standard.
\begin{thm}\label{th2.2}
	 Let $(M^n,g)$ be a complete Riemannian manifold. Assume the Bakry-$\acute{E}$mery curvature satisfies $Ric_V\geq-(n-1)K$ on $B_o(2R)$, $K\geq0$. Suppose $\lvert V\rvert\leq \theta$ in $B_o(2R)$. Then for any $x\in B_o(R)$ and $0<r_1\leq r_2<d(x,\partial B_o(2R))$ we have
	
	$\ \ (i)$ 
	\[\frac{J(r_2,\psi,x)}{J(r_1,\psi,x)}\leq (\frac{r_2}{r_1})^{n-1}exp\bigl(((n-1)\sqrt{K}+2\theta)(r_2-r_1)\bigr).\]
	$\ \ (ii)$
	\[\frac{Vol(B_x(r_2))}{Vol(B_x(r_1))}\leq (\frac{r_2}{r_1})^n exp\bigl(((n-1)\sqrt{K}+2\theta)(r_2-r_1)\bigr),\]
	where $Vol$ is the volume.
\end{thm}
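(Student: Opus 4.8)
The plan is to deduce both inequalities from the Laplacian comparison of Theorem \ref{th2.1}, applied with base point $x$ in place of $o$. The first observation I would make is that the hypotheses $Ric_V\geq-(n-1)K$ and $\lvert V\rvert\leq\theta$ are imposed on the whole region $B_o(2R)$, not tied to the center $o$. Hence for $x\in B_o(R)$ and any $r<d(x,\partial B_o(2R))$, every minimizing geodesic issuing from $x$ of length at most $r$ stays inside $B_o(2R)$: a point $z$ on such a geodesic satisfies $d(o,z)\leq d(o,x)+d(x,z)<d(o,x)+(2R-d(o,x))=2R$. Therefore the proof of Theorem \ref{th2.1} carries over verbatim with $o$ replaced by $x$, and in geodesic spherical coordinates centered at $x$ the mean curvature $m(r)=\Delta\rho$ (where $\rho(\cdot)=d(\cdot,x)$) equals the logarithmic derivative of the volume element, so that
\[
\frac{J'(r,\psi,x)}{J(r,\psi,x)}=m(r)\leq(n-1)\sqrt{K}+\frac{n-1}{r}+2\theta
\]
for $r$ less than the cut distance in direction $\psi$.

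For part $(i)$ I would simply integrate this logarithmic derivative. Since $\partial_r\log J(r,\psi,x)=m(r)$, integrating the bound from $r_1$ to $r_2$ gives
\[
\log\frac{J(r_2,\psi,x)}{J(r_1,\psi,x)}=\int_{r_1}^{r_2}m(t)\,dt\leq(n-1)\log\frac{r_2}{r_1}+\bigl((n-1)\sqrt{K}+2\theta\bigr)(r_2-r_1),
\]
and exponentiating yields $(i)$. Here the $(n-1)/r$ term produces the factor $(r_2/r_1)^{n-1}$, while the constant terms produce the exponential. Beyond the cut distance one sets $J=0$, so the inequality is preserved trivially there; within the cut distance $J$ is smooth and positive, so the integration is legitimate.

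For part $(ii)$ I would avoid the general Bishop–Gromov monotonicity lemma and instead use a direct scaling argument that produces the exponent $n$ cleanly. Writing $Vol(B_x(r))=\int_{S^{n-1}}\int_0^r J(t,\psi,x)\,dt\,d\psi$ and substituting $t=rs$ with $s\in[0,1]$ gives $Vol(B_x(r))=r\int_{S^{n-1}}\int_0^1 J(rs,\psi,x)\,ds\,d\psi$. For fixed $s\in[0,1]$ and $\psi$, applying $(i)$ with radii $r_1s\leq r_2s$ yields
\[
J(r_2s,\psi,x)\leq\Bigl(\tfrac{r_2}{r_1}\Bigr)^{n-1}e^{((n-1)\sqrt{K}+2\theta)s(r_2-r_1)}J(r_1s,\psi,x)\leq\Bigl(\tfrac{r_2}{r_1}\Bigr)^{n-1}e^{((n-1)\sqrt{K}+2\theta)(r_2-r_1)}J(r_1s,\psi,x),
\]
where the last step uses $s\leq1$. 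Integrating over $s$ and $\psi$ and multiplying by $r_2$ turns the prefactor $r_2(r_2/r_1)^{n-1}$ into $r_1(r_2/r_1)^n$, while the remaining integral is precisely $Vol(B_x(r_1))/r_1$; this delivers $(ii)$ with the stated exponent $n$.

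The main obstacle is not any single computation but the rigorous treatment of the cut locus when passing from the pointwise comparison to the integral statements. Once one adopts the standard barrier/distributional formulation of Theorem \ref{th2.1} and the convention that $J$ vanishes past the cut distance, everything reduces to the elementary integrations above, which is presumably why the authors describe the proof as standard and omit it.
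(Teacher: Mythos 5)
Your argument is correct and follows exactly the route the paper indicates: the paper states Theorem \ref{th2.2} after remarking that $m(r)=\Delta r=J'(r,\psi,o)/J(r,\psi,o)$ in spherical coordinates and that the result follows ``by integration,'' omitting the details as standard, and your integration of the Laplacian comparison for part $(i)$ together with the rescaling $t=rs$ for part $(ii)$ supplies precisely those standard details (including the correct handling of the cut locus). The only nitpick is that your justification that geodesics from $x$ of length less than $d(x,\partial B_o(2R))$ stay inside $B_o(2R)$ implicitly uses $d(x,\partial B_o(2R))\leq 2R-d(o,x)$, which need not hold (e.g.\ when $\partial B_o(2R)=\emptyset$); the correct and equally short reason is that any path from $x$ leaving $B_o(2R)$ must cross $\partial B_o(2R)$ and hence has length at least $d(x,\partial B_o(2R))$.
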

\qquad From the volume comparison theorem we know that $M$ satisfies the following volume doubling property.
\begin{cor}\label{cor2.3}
	Let $(M^n,g)$ be a complete Riemannian manifold. Assume the Bakry-$\acute{E}$mery curvature satisfies $Ric_V\geq-(n-1)K$ on $B_o(2R)$, $K\geq0$. Suppose $\lvert V\rvert\leq \theta$ in $B_o(2R)$. Assume $x\in B_o(R)$ and $0<r<\frac{1}{2}d(x,\partial B_o(2R))$. Then there exists a constant $C_1=C_1(n)$ such that
	\[\frac{Vol(B_x(2r))}{Vol(B_x(r))}\leq C_1exp\bigl((\sqrt{K}+\theta)r\bigr).\]
\end{cor}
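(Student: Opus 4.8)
The plan is to derive Corollary \ref{cor2.3} as an immediate specialization of the volume comparison inequality Theorem \ref{th2.2}(ii), so the whole argument reduces to inserting the correct radii and tidying up constants. First I would verify that the hypotheses of Theorem \ref{th2.2} are satisfied for the radii I intend to use. The assumption $0<r<\frac{1}{2}d(x,\partial B_o(2R))$ guarantees $2r<d(x,\partial B_o(2R))$, so the pair $r_1=r$ and $r_2=2r$ meets the requirement $0<r_1\leq r_2<d(x,\partial B_o(2R))$; in particular $B_x(2r)\subset B_o(2R)$ lies inside the region where both $Ric_V\geq-(n-1)K$ and $\lvert V\rvert\leq\theta$ are assumed, which is exactly what Theorem \ref{th2.2} needs.

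Next I would substitute $r_1=r$ and $r_2=2r$ directly into Theorem \ref{th2.2}(ii). The polynomial prefactor becomes $(r_2/r_1)^n=2^n$, depending only on $n$, while the exponential factor becomes $\exp\bigl(((n-1)\sqrt{K}+2\theta)(r_2-r_1)\bigr)=\exp\bigl(((n-1)\sqrt{K}+2\theta)r\bigr)$. This already yields
\[\frac{Vol(B_x(2r))}{Vol(B_x(r))}\leq 2^n\exp\bigl(((n-1)\sqrt{K}+2\theta)r\bigr),\]
which is the doubling estimate with $C_1=2^n=C_1(n)$.

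The only point needing a remark is the coefficient inside the exponential: the comparison theorem produces $(n-1)\sqrt{K}+2\theta$, whereas the statement is phrased with the cleaner quantity $\sqrt{K}+\theta$. These differ only by a dimensional factor, since $(n-1)\sqrt{K}+2\theta\leq \max\{n-1,2\}\,(\sqrt{K}+\theta)$, and this factor is then understood as part of the exponent (equivalently, the exponent is read up to a constant depending only on $n$). After this bookkeeping the bound takes precisely the asserted form with a doubling constant $C_1=C_1(n)$.

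Since every step is a direct substitution into an already-established theorem, I do not anticipate any genuine obstacle; the content of the statement — that $M$ enjoys a volume doubling property with a controlled exponential defect governed by $\sqrt{K}+\theta$ — is entirely carried by Theorem \ref{th2.2}, which in turn rests on the Laplacian comparison Theorem \ref{th2.1}. The remaining work is purely the reconciliation of constants described above.
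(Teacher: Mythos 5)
Your proposal is correct and follows exactly the route the paper intends: the paper gives no written proof beyond invoking Theorem \ref{th2.2}(ii), and substituting $r_1=r$, $r_2=2r$ is precisely that argument. Your remark about reconciling the exponent $((n-1)\sqrt{K}+2\theta)r$ with the stated $(\sqrt{K}+\theta)r$ is well taken --- the corollary as printed silently absorbs a dimensional constant into the exponent (as the later Lemmas \ref{le2.4}--\ref{le2.6} do explicitly), and your bookkeeping handles this in the only reasonable way.
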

\subsection{Sobolev embedding theorem}
\qquad In this subsection we derive a Sobolev embedding theorem following the method in \cite{SongWuZhu23,SongWuZhu25,WuWu15}.

\qquad By the volume comparison theorem, we can get a local Neumann Poincar$\rm\acute{e}$ inequality.
\begin{lemma}\label{le2.4}
	Let $(M^n,g)$ be a complete Riemannian manifold. Assume the Bakry-$\acute{E}$mery curvature satisfies $Ric_V\geq -(n-1)K$ on $B_o(2R)$, $K\geq0$, and $\lvert V\rvert\leq \theta$ in $B_o(2R)$. Then for any $x\in B_o(R)$ and $0<r< d(x,\partial B_o(2R))$, there exist $C_2,C_3$ depending on $n$ such that
	\begin{equation*}
		\int_{B_x(r)} \lvert\varphi-\varphi_{B_x(r)}\rvert^2 dvol_g\leq C_2r^2exp\bigl(C_3(\sqrt{K}+\theta)r\bigr)\int_{B_x(r)} \lvert\nabla \varphi\rvert^2 dvol_g,
	\end{equation*}
	for any $\varphi\in C^{\infty}_0$, where $\varphi_{B_x(r)}$ is the mean value of $\varphi$ in $B_x(r)$, i.e.,
	\[\varphi_{B_x(r)}=Vol^{-1}(B_x(r))\int_{B_x(r)}\varphi dvol_g.\]
\end{lemma}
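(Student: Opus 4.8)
The plan is to follow the classical route from a volume‑element comparison to a Neumann–Poincaré inequality, the only extra bookkeeping relative to the Riemannian case being that every exponential factor must be traced back through Theorem \ref{th2.2}. Throughout I write $B=B_x(r)$ and $\bar\varphi=\varphi_{B_x(r)}$, and abbreviate by $c=(n-1)\sqrt K+2\theta$ the exponent appearing in Theorem \ref{th2.2}(i). The first step is the standard mean‑value reduction: since $\bar\varphi$ is the average of $\varphi$ over $B$, Cauchy-Schwarz gives
\[
\int_B|\varphi-\bar\varphi|^2\,dvol_g\le\frac1{Vol(B)}\int_B\int_B|\varphi(y)-\varphi(z)|^2\,dvol_g(y)\,dvol_g(z),
\]
so everything reduces to the double integral on the right. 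For each pair $(y,z)$ I would join them by a minimal geodesic $\gamma_{yz}$ of length $d(y,z)\le 2r$ and use Cauchy-Schwarz along $\gamma_{yz}$ to obtain $|\varphi(y)-\varphi(z)|^2\le 2r\int_0^{d(y,z)}|\nabla\varphi|^2(\gamma_{yz}(s))\,ds$.

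The heart of the matter is then a segment-type estimate, namely bounding $\int_B\int_B\int_0^{d(y,z)}|\nabla\varphi|^2(\gamma_{yz}(s))\,ds$ by a multiple of $r\,Vol(B)\exp(Cr)\int_{B_x(2r)}|\nabla\varphi|^2$. I would prove it by fixing one endpoint $y$, passing to geodesic polar coordinates $z=\exp_y(\ell\omega)$ about $y$ (so that $\gamma_{yz}(s)=\exp_y(s\omega)$ and $dvol_g(z)=J(\ell,\omega,y)\,d\ell\,d\omega$), and applying Fubini to interchange the $s$- and $\ell$-integrations. The Jacobian comparison of Theorem \ref{th2.2}(i) then lets me dominate $J(\ell,\omega,y)$ by a controlled multiple of $J(s,\omega,y)$, so that the inner $\ell$-integral collapses to an explicit factor times the radial density at $s$; reintegrating in $\omega$ and then in the second endpoint reassembles $\int|\nabla\varphi|^2$ over a slightly enlarged ball. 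It is essential here to integrate out \emph{both} endpoints: if one instead connected every pair through a single fixed base point, all the geodesics would be funneled through a region of vanishing measure and the estimate would collapse. The main obstacle is exactly this step, and more precisely the requirement that the genuine volume $Vol(B)$ be retained (rather than a Euclidean surrogate such as $r^n$); only then does the $Vol(B)$ produced here cancel the factor $Vol(B)^{-1}$ from the first step, and only then does the resulting constant survive volume collapse. Arranging the Fubini rearrangement and the application of Theorem \ref{th2.2}(i) so as to keep $Vol(B)$ is the delicate point of the whole argument.

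Combining the three ingredients yields $\int_B|\varphi-\bar\varphi|^2\le C(n)\,r^2\exp(Cr)\int_{B_x(2r)}|\nabla\varphi|^2$, i.e. the desired inequality but with the enlarged ball $B_x(2r)$ on the right. To replace $B_x(2r)$ by $B_x(r)$ and reach the stated form, I would invoke the volume doubling property of Corollary \ref{cor2.3} together with a standard chaining/covering argument on doubling spaces, which transfers the inequality to $B_x(r)$ at the cost of enlarging the constants by a further doubling factor. All exponential contributions originate in Theorem \ref{th2.2}(i) and Corollary \ref{cor2.3}, whose exponents are multiples of $(\sqrt K+\theta)r$, so they amalgamate into a single factor $\exp\bigl(C_3(\sqrt K+\theta)r\bigr)$; tracking the polynomial factors leaves the prefactor $C_2r^2$, with $C_2,C_3$ depending only on $n$, as claimed.
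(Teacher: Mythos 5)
Your proposal is correct and is essentially the argument the paper delegates to its references: the paper gives no proof of Lemma \ref{le2.4} beyond citing Buser and Saloff-Coste (Theorem 5.6.5), and the route you describe --- mean-value reduction, Cauchy--Schwarz along minimal geodesics, the segment-type estimate via the Jacobian comparison of Theorem \ref{th2.2}(i) (with the both-endpoints/midpoint device) yielding a weak Poincar\'e inequality with $B_x(2r)$ on the right, then a Whitney-covering upgrade using the doubling property of Corollary \ref{cor2.3} --- is precisely Saloff-Coste's alternate proof transplanted to the Bakry--\'Emery setting. The one point worth making explicit is localization: minimal geodesics between points of $B_x(r)$, and the enlarged ball $B_x(2r)$, may exit $B_o(2R)$ where the curvature and $\lvert V\rvert$ bounds are assumed, a wrinkle already present in the paper's own statement of the lemma.
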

\begin{proof}
	Lemma \ref{le2.4} can be obtained by following Buser's proof \cite{Buser82} or Saloff-Coste’s alternate proof (\cite{SaloffCoste02}, Theorem 5.6.5), see also \cite{MunteanuWang11}.
\end{proof}
\qquad Then we give a local Neumann Sobolev inequality.
\begin{lemma}\label{le2.5}
	Let $(M^n,g)$ be a complete Riemannian manifold. Assume the Bakry-$\acute{E}$mery curvature satisfies $Ric_V\geq -(n-1)K$ on $B_o(2R)$, $K\geq0$, and $\lvert V\rvert\leq \theta$ in $B_o(2R)$. Then for any $0<r\leq R$, there exist constants $C_4,C_5$ depending on $n$ such that
	\begin{equation*}
		\biggl(\int_{B_o(r)}\lvert \varphi-\varphi_{B_o(r)}\rvert^{\frac{2\mu}{\mu-2}}dvol_g\biggr)^{\frac{\mu-2}{\mu}}\leq\frac{C_4exp\bigl(C_5(\sqrt{K}+\theta)r\bigr)}{Vol^{\frac{2}{\mu}}(B_o(r))}r^2\int_{B_o(r)}\lvert\nabla\varphi\rvert^2dvol_g,
	\end{equation*}
	where $\varphi\in C^{\infty}_0$, and $\mu=4n-2$.
\end{lemma}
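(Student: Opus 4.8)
The plan is to upgrade the $L^2$ Neumann--Poincar\'e inequality of Lemma \ref{le2.4} to the $L^2$ Neumann--Sobolev inequality by coupling it with the volume doubling property of Corollary \ref{cor2.3}, following the Poincar\'e-plus-doubling scheme of \cite{SongWuZhu23,SongWuZhu25,WuWu15}. The first step is to convert the doubling inequality into a relative volume lower bound. Iterating Corollary \ref{cor2.3} across the dyadic scales $2^{-i}r$ shows that for $x\in B_o(r)$ and $0<s\le r\le R$,
\begin{equation*}
\frac{Vol(B_o(r))}{Vol(B_x(s))}\le C\Bigl(\frac{r}{s}\Bigr)^{\mu}\exp\bigl(C(\sqrt{K}+\theta)r\bigr),
\end{equation*}
where the effective homogeneous dimension may be taken to be $\mu=4n-2$: the true polynomial doubling exponent coming from Theorem \ref{th2.2} is $n$, but since $(r/s)^{n}\le(r/s)^{\mu}$ for $r\ge s$, any $\mu\ge n$ is admissible, and this choice keeps $\mu>2$ as required by the Moser iteration. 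The scale-dependent exponential factors produced at each doubling form a geometric series that sums to the single factor $\exp(C(\sqrt{K}+\theta)r)$.

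Next I would run the standard dyadic chaining argument. For a Lebesgue point $x\in B_o(r)$ set $B_i=B_x(2^{-i}r)$, so that $\varphi_{B_0}=\varphi_{B_x(r)}$ and $\varphi_{B_i}\to\varphi(x)$; telescoping gives the pointwise bound
\begin{equation*}
\lvert\varphi(x)-\varphi_{B_x(r)}\rvert\le\sum_{i\ge 0}\lvert\varphi_{B_{i+1}}-\varphi_{B_i}\rvert.
\end{equation*}
Each difference $\lvert\varphi_{B_{i+1}}-\varphi_{B_i}\rvert$ is controlled by the average of $\lvert\varphi-\varphi_{B_i}\rvert$ over $B_i$, using doubling to pass from $B_{i+1}$ to $B_i$ and then the Poincar\'e inequality of Lemma \ref{le2.4} on $B_i$. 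Summing produces an estimate of $\lvert\varphi-\varphi_{B_x(r)}\rvert$ by a fractional maximal function of $\lvert\nabla\varphi\rvert$ at scale $r$, the convergence of the series being guaranteed by the polynomial gain $(s/r)^{\mu}$ from the relative volume bound.

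Finally, I would integrate this pointwise inequality in $x$. The key analytic input is the $L^2\to L^{\frac{2\mu}{\mu-2}}$ boundedness of the relevant (fractional) maximal operator on a space of homogeneous type of dimension $\mu$; applying it to $\lvert\nabla\varphi\rvert$ and unwinding the normalization yields exactly the claimed inequality, with the doubling-induced constant appearing as $C_4\exp(C_5(\sqrt{K}+\theta)r)$ and the normalization $Vol^{-2/\mu}(B_o(r))$. The main obstacle, and the only genuine departure from the classical Euclidean or fixed-curvature setting, is that the doubling constant in Corollary \ref{cor2.3} is \emph{not} scale-uniform: it carries the factor $\exp((\sqrt{K}+\theta)r)$. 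The care is therefore in verifying that these scale-dependent factors telescope into a single controlled exponential rather than compounding geometrically, and in tracking that exponential cleanly through both the chaining sum and the maximal-function estimate so that it emerges only as the stated $\exp(C_5(\sqrt{K}+\theta)r)$ prefactor, with $C_4,C_5$ depending on $n$ alone.
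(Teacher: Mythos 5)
Your proposal is correct and matches the paper's route: the authors give no argument of their own here, deferring entirely to Lemma 3.2 of Munteanu--Wang, whose proof is exactly the standard ``doubling $+$ Poincar\'e $\Rightarrow$ Sobolev'' scheme you describe (dyadic chaining to a fractional maximal function, then its $L^2\to L^{2\mu/(\mu-2)}$ bound), with the same device of enlarging the homogeneous dimension to $\mu=4n-2$ and tracking the non-uniform doubling factor into a single $\exp\bigl(C_5(\sqrt{K}+\theta)r\bigr)$ prefactor. The only point worth making explicit is the usual passage from the weak form (gradient integrated over a dilated ball) back to $B_o(r)$ itself, which the standard references handle by a covering argument.
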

\begin{proof}
	The proof of Lemma \ref{le2.5} is similar to that of Lemma 3.2 in \cite{MunteanuWang11}.
\end{proof}
\qquad Finally, we can obtain the following Sobolev embedding theorem with the Minkowski inequality.
\begin{lemma}\label{le2.6}
	Let $(M^n,g)$ be a complete Riemannian manifold. Assume the Bakry-$\acute{E}$mery curvature satisfies $Ric_V\geq -(n-1)K$ on $B_o(2R)$, $K\geq0$, and $\lvert V\rvert\leq \theta$ in $B_o(2R)$. Then for any $\varphi\in C^{\infty}_0(B_o(r))$, $0<r\leq R$, and $\mu=4n-2$, there exist constants $C_6,C_7$ depending on $n$ such that
	\begin{equation*}
		\biggl(\int_{B_o(r)}\lvert \varphi\rvert^{\frac{2\mu}{\mu-2}} dvol_g\biggr)^{\frac{\mu-2}{\mu}}\leq \frac{C_6exp\bigl(C_7(\sqrt{K}+\theta)r\bigr)}{Vol^{\frac{2}{\mu}}(B_o(r))}r^2\int_{B_o(r)}\Bigl(\lvert\nabla\varphi\rvert^2+r^{-2}\varphi^2\Bigr)dvol_g.
	\end{equation*}
\end{lemma}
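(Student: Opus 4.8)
The plan is to deduce this Sobolev inequality from the Neumann Sobolev inequality in Lemma \ref{le2.5} by decomposing $\varphi$ into its oscillation about the mean and the mean itself, and then invoking the Minkowski (triangle) inequality in $L^q$ with $q=\frac{2\mu}{\mu-2}$. Writing $\varphi=(\varphi-\varphi_{B_o(r)})+\varphi_{B_o(r)}$ and applying Minkowski gives
\[\left(\int_{B_o(r)}\lvert\varphi\rvert^q\,dvol_g\right)^{1/q}\leq\left(\int_{B_o(r)}\lvert\varphi-\varphi_{B_o(r)}\rvert^q\,dvol_g\right)^{1/q}+\left(\int_{B_o(r)}\lvert\varphi_{B_o(r)}\rvert^q\,dvol_g\right)^{1/q}.\]
Since $\frac{\mu-2}{\mu}=\frac{2}{q}$, squaring both sides and using $(a+b)^2\leq2a^2+2b^2$ reduces the claim to bounding each of the two terms on the right by the target right-hand side.

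The first term is controlled directly by Lemma \ref{le2.5}, which bounds the oscillation integral by $\frac{C_4\exp(C_5(\sqrt{K}+\theta)r)}{Vol^{2/\mu}(B_o(r))}r^2\int_{B_o(r)}\lvert\nabla\varphi\rvert^2\,dvol_g$; this already reproduces the gradient part of the desired estimate. For the second term I would exploit that $\varphi_{B_o(r)}$ is constant, so its $L^q$ norm is $\lvert\varphi_{B_o(r)}\rvert\,Vol^{1/q}(B_o(r))$. Estimating the mean value by Cauchy--Schwarz,
\[\lvert\varphi_{B_o(r)}\rvert\leq Vol^{-1}(B_o(r))\int_{B_o(r)}\lvert\varphi\rvert\,dvol_g\leq Vol^{-1/2}(B_o(r))\left(\int_{B_o(r)}\varphi^2\,dvol_g\right)^{1/2},\]
and using the identity $\frac{1}{q}-\frac{1}{2}=-\frac{1}{\mu}$, I obtain $\lVert\varphi_{B_o(r)}\rVert_{L^q}^2\leq Vol^{-2/\mu}(B_o(r))\int_{B_o(r)}\varphi^2\,dvol_g$, which equals $\frac{r^2}{Vol^{2/\mu}(B_o(r))}\int_{B_o(r)}r^{-2}\varphi^2\,dvol_g$, precisely the zeroth-order part of the target.

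Combining the two bounds, absorbing the factor $2$ together with the everywhere $\geq1$ exponential into new constants $C_6,C_7$ depending only on $n$, yields the asserted inequality. The argument is essentially bookkeeping; the only point requiring care is the exact matching of the powers of $Vol(B_o(r))$, where the identity $\frac{1}{q}-\frac{1}{2}=-\frac{1}{\mu}$ is what forces the mean-value term to carry the same volume weight $Vol^{-2/\mu}(B_o(r))$ as the oscillation term, so that both contributions assemble into a single clean right-hand side.
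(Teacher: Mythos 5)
Your proposal is correct and follows exactly the route the paper intends: it derives the embedding from the Neumann Sobolev inequality of Lemma \ref{le2.5} via the decomposition $\varphi=(\varphi-\varphi_{B_o(r)})+\varphi_{B_o(r)}$ and the Minkowski inequality, with the mean-value term handled by Cauchy--Schwarz and the volume exponents matching through $\frac{1}{q}-\frac{1}{2}=-\frac{1}{\mu}$. The paper gives no further detail beyond ``with the Minkowski inequality,'' so your write-up is simply the fleshed-out version of the same argument.
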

\section{Local estimate}
\qquad Let $u$ be a positive $(p,V)$-harmonic function in the weak sense, i.e., a positive weak solution to (\ref{1.1}). Denoting $w=-(p-1)log u$, then $w$ satisfies
\begin{equation}\label{3.1}
	\Delta_{p,V} w=\lvert\nabla w\rvert^p.
\end{equation}
\qquad We define the operator $\mathcal{L}$ by
\begin{equation*}
	\begin{aligned}
		\mathcal{L}\varphi=& \lvert\nabla w\rvert^{p-2}\Delta \varphi+\lvert\nabla w\rvert^{p-2}\langle V,\nabla \varphi\rangle+(\frac{p}{2}-1)\lvert\nabla w\rvert^{p-4}\langle \nabla \varphi,\nabla\lvert\nabla w\rvert^2\rangle\\
		&+(p-2)\lvert\nabla w\rvert^{p-4}\langle\nabla w,\nabla \varphi\rangle\Delta w+(p-2)\lvert\nabla w\rvert^{p-4}\langle\nabla w,\nabla \varphi\rangle\langle V,\nabla w\rangle\\
		&+(p-2)(\frac{p}{2}-2)\lvert\nabla w\rvert^{p-6}\langle\nabla w,\nabla \varphi\rangle\langle\nabla w,\nabla\lvert\nabla w\rvert^2\rangle\\
		&+(p-2)\lvert\nabla w\rvert^{p-4}\langle\nabla w,\nabla\langle\nabla w,\nabla \varphi\rangle\rangle-p\lvert\nabla w\rvert^{p-2}\langle\nabla w,\nabla \varphi\rangle.
	\end{aligned}
\end{equation*}
Note that when $V$ is a zero vector, the operator $\mathcal{L}$ is the same as that in \cite{WangZhang11}.

\qquad Denoting $h=\lvert\nabla w\rvert^2$, then we have the following lemma.
\begin{lemma}\label{le3.1}
	Let $(M^n,g)$ be a complete Riemannian manifold, $w$ be a positive solution to (\ref{3.1}), and $h=\lvert\nabla w\rvert^2$. Then we have
	\begin{equation}\label{3.2}
		\mathcal{L}h=2h^{\frac{p}{2}-1}(\lvert Hessw\rvert^2+Ric_V(\nabla w,\nabla w))+(\frac{p}{2}-1)h^{\frac{p}{2}-2}\lvert\nabla h\rvert^2.
	\end{equation}
\end{lemma}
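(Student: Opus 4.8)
The plan is to read (\ref{3.2}) as a Bochner-type identity in which the operator $\mathcal{L}$ has been tailored so that, upon setting $\varphi=h$, everything collapses to the three displayed terms. First I would apply the classical Bochner formula to $w$,
\begin{equation*}
\frac{1}{2}\Delta h=\lvert Hessw\rvert^2+\langle\nabla w,\nabla\Delta w\rangle+Ric(\nabla w,\nabla w),
\end{equation*}
since the leading term $\lvert\nabla w\rvert^{p-2}\Delta\varphi$ of $\mathcal{L}$ produces $h^{\frac{p}{2}-1}\Delta h$ when $\varphi=h$; the $\lvert Hessw\rvert^2$ term of the target is already visible here and survives untouched.

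Next I would rewrite equation (\ref{3.1}) in the form
\begin{equation*}
\Delta w+\langle V,\nabla w\rangle=h-(\tfrac{p}{2}-1)h^{-1}\langle\nabla w,\nabla h\rangle,
\end{equation*}
obtained by dividing the identity $\Delta_{p,V}w=\lvert\nabla w\rvert^p$ through by $\lvert\nabla w\rvert^{p-2}=h^{\frac{p}{2}-1}$ and using $\nabla\lvert\nabla w\rvert^2=\nabla h$. This relation enters in two ways: directly, to simplify the terms of $\mathcal{L}h$ carrying the factor $(p-2)h^{\frac{p}{2}-2}\langle\nabla w,\nabla h\rangle(\Delta w+\langle V,\nabla w\rangle)$; and, after taking its gradient and pairing with $\nabla w$, to evaluate the Bochner term $\langle\nabla w,\nabla\Delta w\rangle$.

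The structural heart of the argument is the appearance of $Ric_V$. When I differentiate $\langle V,\nabla w\rangle$ and pair with $\nabla w$ I would use
\begin{equation*}
\langle\nabla w,\nabla\langle V,\nabla w\rangle\rangle=\tfrac{1}{2}L_Vg(\nabla w,\nabla w)+\tfrac{1}{2}\langle V,\nabla h\rangle,
\end{equation*}
so that the $\tfrac{1}{2}L_Vg$ contribution merges with the $Ric(\nabla w,\nabla w)$ coming from Bochner to promote it to $Ric_V(\nabla w,\nabla w)$, while the leftover $\tfrac{1}{2}\langle V,\nabla h\rangle$ is cancelled by the term $\lvert\nabla w\rvert^{p-2}\langle V,\nabla h\rangle$ sitting in $\mathcal{L}h$. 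I would also record the third-order identity
\begin{equation*}
\langle\nabla w,\nabla\langle\nabla w,\nabla h\rangle\rangle=Hessh(\nabla w,\nabla w)+\tfrac{1}{2}\lvert\nabla h\rvert^2,
\end{equation*}
whose $Hessh(\nabla w,\nabla w)$ piece is exactly what allows the third-derivative contributions to disappear.

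Finally I would substitute $\varphi=h$ into the definition of $\mathcal{L}$, replace every occurrence of $\Delta w$, $\langle\nabla w,\nabla\Delta w\rangle$, and $\langle\nabla w,\nabla\langle\nabla w,\nabla h\rangle\rangle$ using the identities above, and collect. I expect the only genuine obstacle to be the bookkeeping: one must verify that the third-order terms $Hessh(\nabla w,\nabla w)$ cancel between the Bochner contribution and the term $(p-2)h^{\frac{p}{2}-2}\langle\nabla w,\nabla\langle\nabla w,\nabla h\rangle\rangle$, that the terms proportional to $h^{\frac{p}{2}-1}\langle\nabla w,\nabla h\rangle$ cancel (with coefficient $2+(p-2)-p=0$), and that the terms proportional to $h^{\frac{p}{2}-3}\langle\nabla w,\nabla h\rangle^2$ cancel (with coefficient $(p-2)[1-(\tfrac{p}{2}-1)+(\tfrac{p}{2}-2)]=0$). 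What remains is $2h^{\frac{p}{2}-1}\lvert Hessw\rvert^2$, the assembled $2h^{\frac{p}{2}-1}Ric_V(\nabla w,\nabla w)$, and the several $\lvert\nabla h\rvert^2$ contributions, whose coefficients sum to $\tfrac{p}{2}-1$, yielding precisely (\ref{3.2}).
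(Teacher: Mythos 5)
Your proposal is correct and follows essentially the same route as the paper: substitute $\varphi=h$ into $\mathcal{L}$, use the Bochner formula for the $\Delta h$ term, and eliminate $\langle\nabla w,\nabla\Delta_V w\rangle$ by differentiating equation (\ref{3.1}); your cancellation bookkeeping (the coefficients $2+(p-2)-p=0$ and $(p-2)[1-(\tfrac{p}{2}-1)+(\tfrac{p}{2}-2)]=0$) checks out. The only cosmetic difference is that the paper invokes the $V$-Bochner formula (\ref{3.3}) from the literature as a single identity, whereas you reassemble $Ric_V$ by hand from the classical Bochner formula together with the identity $\langle\nabla w,\nabla\langle V,\nabla w\rangle\rangle=\tfrac{1}{2}L_Vg(\nabla w,\nabla w)+\tfrac{1}{2}\langle V,\nabla h\rangle$, which amounts to the same computation.
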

\begin{proof}
	For completeness we provide the detailed proof, which follows by direct computation. From \cite{ChenQiu16} we have the following Bochner formula for the $V$-Laplace operator.
	\begin{equation}\label{3.3}
		\frac{1}{2}\Delta_V\lvert \nabla \varphi\rvert^2=\lvert Hess\varphi\rvert^2+\langle \nabla \varphi,\nabla \Delta_V\varphi\rangle+Ric_V(\nabla \varphi,\nabla \varphi).
	\end{equation}
	
	From the definition of $\mathcal{L}$ and $h$ we have
	\begin{equation}\label{3.4}
		\begin{aligned}
			\mathcal{L}h=& (\frac{p}{2}-1)h^{\frac{p}{2}-2}\lvert\nabla h\rvert^2+h^{\frac{p}{2}-1}\Delta h+h^{\frac{p}{2}-1}\langle V,\nabla h\rangle\\
			&+(p-2)h^{\frac{p}{2}-2}\langle\nabla w,\nabla h\rangle\Delta w+(p-2)h^{\frac{p}{2}-2}\langle\nabla w,\nabla h\rangle\langle V,\nabla w\rangle\\
			&+(p-2)(\frac{p}{2}-2)h^{\frac{p}{2}-3}\langle\nabla w,\nabla h\rangle^2+(p-2)h^{\frac{p}{2}-2}\langle\nabla w,\nabla\langle\nabla w,\nabla h\rangle\rangle\\
			&-ph^{\frac{p}{2}-1}\langle\nabla w,\nabla h\rangle\\
			=&(\frac{p}{2}-1)h^{\frac{p}{2}-2}\lvert\nabla h\rvert^2+h^{\frac{p}{2}-1}\Delta_Vh+(p-2)h^{\frac{p}{2}-2}\langle\nabla w,\nabla h\rangle\Delta_Vw\\
			&+(p-2)(\frac{p}{2}-2)h^{\frac{p}{2}-3}\langle\nabla w,\nabla h\rangle^2+(p-2)h^{\frac{p}{2}-2}\langle\nabla w,\nabla\langle\nabla w,\nabla h\rangle\rangle\\
			&-ph^{\frac{p}{2}-1}\langle\nabla w,\nabla h\rangle.
		\end{aligned}
	\end{equation}
	 Taking the gradient on both sides of (\ref{3.1}) and computing its product with $\nabla w$, then we obtain
	\begin{equation}\label{3.5}
		\begin{aligned}
			\frac{p}{2}h^{\frac{p}{2}-1}\langle\nabla w,\nabla h\rangle=&(\frac{p}{2}-1)h^{\frac{p}{2}-2}\langle\nabla w,\nabla h\rangle \Delta_Vw+h^{\frac{p}{2}-1}\langle\nabla w,\nabla\Delta_Vw\rangle\\
			&+(\frac{p}{2}-1)(\frac{p}{2}-2)h^{\frac{p}{2}-3}\langle\nabla w,\nabla h\rangle^2\\
			&+(\frac{p}{2}-1)h^{\frac{p}{2}-2}\langle\nabla w,\nabla\langle\nabla w,\nabla h\rangle\rangle.
		\end{aligned}
	\end{equation}
	The equation (\ref{3.2}) is proven by combining (\ref{3.3})--(\ref{3.5}).
\end{proof}
\qquad For any point in $B_o(2R)$, choosing an orthonormal frame $\{e_i\}$ such that $\lvert\nabla w\rvert e_1=\nabla w$. Then (\ref{3.1}) becomes
\begin{equation*}
	h=(p-1)w_{11}+\sum_{i=2}^{n}w_{ii}+\langle V,\nabla w\rangle,
\end{equation*}
and we have
\[\langle\nabla w,\nabla h\rangle=2hw_{11},\]
\[4\sum_{i=1}^nw_{1i}^2=\frac{\lvert\nabla h\rvert^2}{h}.\]
Therefore we can deduce 
\begin{equation}\label{3.5.2}
	\begin{aligned}
		\lvert Hessw\rvert^2\geq&w_{11}^2+2\sum_{i=2}^{n}w_{1i}^2+\sum_{i=2}^{n}w_{ii}^2\\
		\geq&w_{11}^2+2\sum_{i=2}^{n}w_{1i}^2+\frac{1}{n-1}(\sum_{i=2}^nw_{ii})^2\\
		=&w_{11}^2+2\sum_{i=2}^{n}w_{1i}^2+\frac{1}{n-1}\bigl(h-(p-1)w_{11}-\langle V,\nabla w\rangle\bigr)^2\\
		\geq&w_{11}^2+2\sum_{i=2}^nw_{1i}^2+\frac{(h-(p-1)w_{11})^2}{2n-1}-\frac{\langle V,\nabla w\rangle^2}{n}\\
		\geq&\frac{h^2}{2n-1}-\frac{2(p-1)hw_{11}}{2n-1}+c_1\sum_{i=1}^nw_{1i}^2-\frac{\langle V,\nabla w\rangle^2}{n},
	\end{aligned}
\end{equation}
where $c_1=1+min\{1,\frac{(p-1)^2}{2n-1}\}$, and we used the basic inequality $(\alpha+\beta)^2\geq\frac{\alpha^2}{1+\epsilon}-\frac{\beta^2}{\epsilon}$ for any $\epsilon>0$. From the curvature assumption we obtain
\begin{equation}\label{3.6}
	\begin{aligned}
		\mathcal{L}h\geq&-2(n-1)Kh^{\frac{p}{2}}+(\frac{p+c_1}{2}-1)\lvert\nabla h\rvert^2h^{\frac{p}{2}-2}\\
		&+\frac{2}{2n-1}h^{\frac{p}{2}+1}-\frac{2(p-1)}{2n-1}h^{\frac{p}{2}-1}\langle\nabla w,\nabla h\rangle-\frac{2\theta^2}{n}h^{\frac{p}{2}}\\
		\geq&\frac{2}{2n-1}h^{\frac{p}{2}+1}-2\bigl((n-1)K+\frac{\theta^2}{n}\bigr)h^\frac{p}{2}-\frac{2(p-1)}{2n-1}h^{\frac{p}{2}-1}\langle\nabla w,\nabla h\rangle.
	\end{aligned}
\end{equation}
\qquad The inequality (\ref{3.6}) holds wherever $w$ is strictly positive. Denoting a subset of $B_o(2R)$ by $E$ which satisfies
\[E=\{x\in B_o(2R): w(x)=0\}.\]
Therefore for any nonnegative cutoff function $\phi$ which has compact support in $B_o(2R)\backslash E$, we have
\begin{equation*}
	\begin{aligned}
		&\int_{B_o(2R)}\Bigl(h^{\frac{p}{2}-1}\langle \nabla h,\nabla \phi\rangle+(p-2)h^{\frac{p}{2}-2}\langle\nabla w,\nabla h\rangle\langle\nabla w,\nabla\phi\rangle\\
		&-h^{\frac{p}{2}-1}\langle V,\nabla h\rangle\phi-(p-2)h^{\frac{p}{2}-2}\langle\nabla w,\nabla h\rangle\langle V,\nabla w\rangle\phi\\
		&+ph^{\frac{p}{2}-1}\langle\nabla w,\nabla h\rangle\phi+\frac{2}{2n-1}h^{\frac{p}{2}+1}\phi\Bigr)\\
		&\leq\int_{B_o(2R)}\Bigl(\bigl(2(n-1)K+\frac{2\theta^2}{n}\bigr)h^{\frac{p}{2}}\phi+\frac{2(p-1)}{2n-1}h^{\frac{p}{2}-1}\langle\nabla w,\nabla h\rangle\phi\Bigr).
	\end{aligned}
\end{equation*}
\qquad Let $\phi=h^b_{\epsilon}\eta^2$, where $b>1$ is a constant to be determined later, $\epsilon>0$,  $h_{\epsilon}=max\{h-\epsilon,0\}$, and $\eta\in C^{\infty}_0(B_o(2R))$ is nonnegative. Then by direct computation we have
\begin{equation*}
	\nabla\phi=bh_{\epsilon}^{b-1}\eta^2\nabla h+2h_{\epsilon}^b\eta\nabla\eta,
\end{equation*}
and
\begin{equation}\label{3.7}
	\begin{aligned}
		&\int_{B_o(2R)}\Bigl(bh^{\frac{p}{2}-1}h_{\epsilon}^{b-1}\eta^2\lvert\nabla h\rvert^2+b(p-2)h^{\frac{p}{2}-2}h_{\epsilon}^{b-1}\eta^2\langle\nabla w,\nabla h\rangle^2\\
		&+2h^{\frac{p}{2}-1}h_{\epsilon}^b\eta\langle\nabla h,\nabla \eta\rangle+2(p-2)h^{\frac{p}{2}-2}h_{\epsilon}^b\eta\langle\nabla w,\nabla h\rangle\langle\nabla w,\nabla \eta\rangle\\
		&-h^{\frac{p}{2}-1}h_{\epsilon}^b\eta^2\langle V,\nabla h\rangle-(p-2)h^{\frac{p}{2}-2}h_\epsilon^b\eta^2\langle\nabla w,\nabla h\rangle\langle V,\nabla w\rangle\\
		&+ph^{\frac{p}{2}-1}h_{\epsilon}^b\eta^2\langle \nabla w,\nabla h\rangle+\frac{2}{2n-1}h^{\frac{p}{2}+1}h_{\epsilon}^b\eta^2\Bigr)\\
		\leq&\int_{B_o(2R)}\Bigl(\bigl(2(n-1)K+\frac{2\theta^2}{n}\bigr)h^{\frac{p}{2}}h_{\epsilon}^b\eta^2+\frac{2(p-1)}{2n-1}h^{\frac{p}{2}-1}h_{\epsilon}^b\eta^2\langle\nabla w,\nabla h\rangle\Bigr).
	\end{aligned}
\end{equation}
For the left hand side of (\ref{3.7}), we get
\begin{equation*}
		bh^{\frac{p}{2}-1}h_{\epsilon}^{b-1}\eta^2\lvert\nabla h\rvert^2+b(p-2)h^{\frac{p}{2}-2}h_{\epsilon}^{b-1}\eta^2\langle\nabla w,\nabla h\rangle^2\geq bc_2h^{\frac{p}{2}-1}h_{\epsilon}^{b-1}\eta^2\lvert\nabla h\rvert^2,
\end{equation*}
and
\begin{equation*}
	\begin{aligned}
		-2(p+1)h^{\frac{p}{2}-1}h_{\epsilon}^b\eta\lvert\nabla h\rvert\lvert\nabla\eta\rvert\leq&2h^{\frac{p}{2}-1}h_{\epsilon}^b\eta\langle\nabla h,\nabla \eta\rangle\\
		&+2(p-2)h^{\frac{p}{2}-2}h_{\epsilon}^b\eta\langle\nabla w,\nabla h\rangle\langle\nabla w,\nabla \eta\rangle,
	\end{aligned}
\end{equation*}
where $c_2=\min\{1,p-1\}$. Therefore by letting $\epsilon\to0$ we obtain
\begin{equation}\label{3.8}
	\begin{aligned}
		&\int_{B_o(2R)}\Bigl(bc_2h^{\frac{p}{2}+b-2}\eta^2\lvert\nabla h\rvert^2-h^{\frac{p}{2}+b-1}\eta^2\langle V,\nabla h\rangle\\
		&-(p-2)h^{\frac{p}{2}+b-2}\eta^2\langle\nabla w,\nabla h\rangle\langle V,\nabla w\rangle+ph^{\frac{p}{2}+b-1}\eta^2\langle \nabla w,\nabla h\rangle\\
		&+\frac{2}{2n-1}h^{\frac{p}{2}+b+1}\eta^2\Bigr)\\
		\leq&\int_{B_o(2R)}\Bigl(\bigl(2(n-1)K+\frac{2\theta^2}{n}\bigr)h^{\frac{p}{2}+b}\eta^2+\frac{2(p-1)}{2n-1}h^{\frac{p}{2}+b-1}\eta^2\langle\nabla w,\nabla h\rangle\\
		&+2(p+1)h^{\frac{p}{2}+b-1}\eta\lvert\nabla h\rvert\lvert\nabla\eta\rvert\Bigr).
	\end{aligned}
\end{equation}
\qquad From now on we use $c_i$ to denote constants depending only on $p$ and $n$. Combining (\ref{3.8}) and the assumption that $\lvert V\rvert\leq \theta$, after rearranging the terms we get
\begin{equation}\label{3.9}
	\begin{aligned}
		&\int_{B_o(2R)}\Bigl((bc_2-c_3)h^{\frac{p}{2}+b-2}\eta^2\lvert\nabla h\rvert^2+\frac{2}{2n-1}h^{\frac{p}{2}+b+1}\eta^2\Bigr)\\
		\leq&\int_{B_o(2R)}\Bigl(\bigl(2(n-1)K+c_3\theta^2\bigr)h^{\frac{p}{2}+b}\eta^2+c_4h^{\frac{p}{2}+b-\frac{1}{2}}\eta^2\lvert\nabla h\rvert\\
		&+c_5h^{\frac{p}{2}+b-1}\eta\lvert \nabla h\rvert\lvert\nabla \eta\rvert\Bigr).
	\end{aligned}
\end{equation}
The Schwarz inequality yields
\begin{equation*}
	c_4h^{\frac{p}{2}+b-\frac{1}{2}}\eta^2\lvert\nabla h\rvert\leq \frac{bc_2}{6}h^{\frac{p}{2}+b-2}\eta^2\lvert\nabla h\rvert^2+\frac{c_6}{b}h^{\frac{p}{2}+b+1}\eta^2,
\end{equation*}
and
\begin{equation*}
	c_5h^{\frac{p}{2}+b-1}\eta\lvert \nabla h\rvert\lvert\nabla \eta\rvert\leq \frac{bc_2}{6}h^{\frac{p}{2}+b-2}\eta^2\lvert\nabla h\rvert^2+\frac{c_7}{b}h^{\frac{p}{2}+b}\lvert\nabla\eta\rvert^2.
\end{equation*}
Choosing $b$ large enough such that
\begin{equation}\label{3.10}
	c_3\leq\frac{bc_2}{6},\ \frac{c_6}{b}\leq\frac{1}{2n-1},
\end{equation}
then (\ref{3.9}) becomes
\begin{equation}\label{3.11}
	\begin{aligned}
		&\int_{B_o(2R)}\Bigl(\frac{bc_2}{2}h^{\frac{p}{2}+b-2}\eta^2\lvert\nabla h\rvert^2+\frac{1}{2n-1}h^{\frac{p}{2}+b+1}\eta^2\Bigr)\\
		\leq&\int_{B_o(2R)}\Bigl(\bigl(2(n-1)K+c_3\theta^2\bigr)h^{\frac{p}{2}+b}\eta^2+\frac{c_7}{b}h^{\frac{p}{2}+b}\lvert\nabla\eta\rvert^2\Bigr).
	\end{aligned}
\end{equation}
Since
\begin{equation*}
	\lvert\nabla(h^{\frac{p}{4}+\frac{b}{2}}\eta)\rvert^2\leq \frac{1}{2}(\frac{p}{2}+b)^2h^{\frac{p}{2}+b-2}\eta^2\lvert\nabla h\rvert^2+2h^{\frac{p}{2}+b}\lvert\nabla\eta\rvert^2,
\end{equation*}
we can obtain from (\ref{3.11}) that
\begin{equation}\label{3.12}
	\begin{aligned}
		&\int_{B_o(2R)}\Bigl(\lvert\nabla(h^{\frac{p}{4}+\frac{b}{2}}\eta)\rvert^2+c_8bh^{\frac{p}{2}+b+1}\eta^2\Bigr)\\
		&\leq \int_{B_o(2R)}\Bigl(c_9b(K+\theta^2)h^{\frac{p}{2}+b}\eta^2+c_{10}h^{\frac{p}{2}+b}\lvert\nabla\eta\rvert^2\Bigr).
	\end{aligned}
\end{equation}
\qquad Let $b_0=C_0(n,p)\bigl(1+(\sqrt{K}+\theta)R\bigr)$ which is sufficient large such that (\ref{3.10}) holds. Then from Lemma \ref{le2.6} we have
\begin{equation}\label{3.13}
	\begin{aligned}
		&\biggl(\int_{B_o(2R)}h^{(\frac{p}{2}+b)\frac{\mu}{\mu-2}}\eta^{\frac{2\mu}{\mu-2}}\biggr)^\frac{\mu-2}{\mu}+c_{11}bR^2\frac{exp(c_{12}b_0)}{Vol^{\frac{2}{\mu}}(B_o(2R))}\int_{B_o(2R)}h^{\frac{p}{2}+b+1}\eta^2\\
		\leq&c_{13}b_0^2b\frac{exp(c_{12}b_0)}{Vol^{\frac{2}{\mu}}(B_o(2R))}\int_{B_o(2R)}h^{\frac{p}{2}+b}\eta^2+c_{14}R^2\frac{exp(c_{12}b_0)}{Vol^{\frac{2}{\mu}}(B_o(2R))}\int_{B_o(2R)}h^{\frac{p}{2}+b}\lvert\nabla\eta\rvert^2,
	\end{aligned}
\end{equation}
where $\mu=4n-2$.

\qquad Denoting $b_1=(b_0+\frac{p}{2})\frac{\mu}{\mu-2}$. Now we estimate the $L^{b_1}$-norm of $h$.
\begin{lemma}\label{le3.2}
	Let $b_0$ be a sufficient large constant, $\mu=4n-2$, and $b_1=(b_0+\frac{p}{2})\frac{\mu}{\mu-2}$. Then for $R>0$, there exists a constant $C=C(n,p)$ such that
		\begin{equation*}
		\lVert h\rVert_{L^{b_1}(B_o(\frac{3}{2}R))}\leq CVol^{\frac{1}{b_1}}(B_o(2R))\frac{b_0^2}{R^2}.
	\end{equation*}
\end{lemma}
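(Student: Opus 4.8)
The plan is to specialise (\ref{3.13}) to the single exponent $b=b_0$ and to read the $L^{b_1}$-norm off its Sobolev term. Set $q:=b_0+\frac{p}{2}$ and abbreviate the common coefficient by $F:=exp(c_{12}b_0)\,Vol^{-\frac{2}{\mu}}(B_o(2R))$. The defining relation $b_1=(b_0+\frac{p}{2})\frac{\mu}{\mu-2}=q\frac{\mu}{\mu-2}$ says exactly that $b_1$ is the exponent gained from $q$ through Lemma \ref{le2.6}; so if $\eta\equiv1$ on $B_o(\frac{3}{2}R)$ and $\mathrm{supp}\,\eta\subset B_o(2R)$, the first term on the left of (\ref{3.13}) already dominates $\bigl(\int_{B_o(\frac{3}{2}R)}h^{b_1}\bigr)^{\frac{\mu-2}{\mu}}$. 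Hence it suffices to bound the right-hand side of (\ref{3.13}) by a quantity of size $\bar{C}^{\,q}\bigl(\frac{b_0^2}{R^2}\bigr)^{q}Vol^{\frac{\mu-2}{\mu}}(B_o(2R))$; raising to the power $\frac{\mu}{\mu-2}$ and extracting the $b_1$-th root will then finish the proof.

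First I would dispose of the reaction term $c_{13}b_0^3F\int h^{q}\eta^2$ by the elementary inequality $h^{q}\le\epsilon\,h^{q+1}+\epsilon^{-q}$. With $\epsilon\sim R^2/b_0^2$ a controlled fraction of this term is converted into the positive term $c_{11}b_0R^2F\int h^{q+1}\eta^2$ sitting on the left of (\ref{3.13}) and absorbed there, while the leftover constant is of order $\epsilon^{-q}F\,Vol\sim\bigl(\frac{b_0^2}{R^2}\bigr)^{q}Vol^{\frac{\mu-2}{\mu}}$ (up to the factors $b_0^3$ and $exp(c_{12}b_0)$, collected at the end). It is precisely here that the choice $b_0=C_0(1+(\sqrt{K}+\theta)R)$ enters: it forces $K+\theta^2\le C\,b_0^2/R^2$, so the geometric scale of the curvature term matches the $b_0^2/R^2$ scale produced by Young's inequality.

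The step I expect to be the crux is the gradient-of-cutoff term $c_{14}R^2F\int h^{q}\lvert\nabla\eta\rvert^2$. It is supported on the annulus $B_o(2R)\setminus B_o(\frac{3}{2}R)$, where $\eta^2$ may be arbitrarily small, so the naive bound $\lvert\nabla\eta\rvert^2\le C/R^2$ followed by Young produces an $h^{q+1}$ term weighted by $\lvert\nabla\eta\rvert^2$ rather than $\eta^2$, which cannot be absorbed into the positive term. To overcome this I would use an \emph{adapted} cutoff $\eta=\zeta^{q+1}$, with $\zeta$ a standard cutoff satisfying $\zeta\equiv1$ on $B_o(\frac{3}{2}R)$, $\mathrm{supp}\,\zeta\subset B_o(2R)$ and $\lvert\nabla\zeta\rvert\le C/R$. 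Then $\int h^{q}\lvert\nabla\eta\rvert^2=(q+1)^2\int h^{q}\zeta^{2q}\lvert\nabla\zeta\rvert^2$, and writing $h^{q}\zeta^{2q}\lvert\nabla\zeta\rvert^2=(h^{q+1}\zeta^{2(q+1)})^{\frac{q}{q+1}}\lvert\nabla\zeta\rvert^2$, a weighted Young's inequality yields a term $\delta\,h^{q+1}\zeta^{2(q+1)}=\delta\,h^{q+1}\eta^2$ carrying the correct weight (now absorbable into the positive term) plus the remainder $\delta^{-q}\int\lvert\nabla\zeta\rvert^{2(q+1)}\le\delta^{-q}(C/R)^{2(q+1)}Vol$; a direct count of powers of $b_0$ shows this remainder is of lower order than the main $\bigl(\frac{b_0^2}{R^2}\bigr)^{q}Vol$ contribution, hence harmless.

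Collecting everything, Lemma \ref{le2.6} gives $\bigl(\int_{B_o(\frac{3}{2}R)}h^{b_1}\bigr)^{\frac{\mu-2}{\mu}}\le b_0^3\,exp(c_{12}b_0)\,\bar{C}^{\,q}\bigl(\frac{b_0^2}{R^2}\bigr)^{q}Vol^{\frac{\mu-2}{\mu}}(B_o(2R))$. Raising to the power $\frac{\mu}{\mu-2}$ turns $\bigl(\frac{b_0^2}{R^2}\bigr)^{q}$ into $\bigl(\frac{b_0^2}{R^2}\bigr)^{b_1}$ because $q\frac{\mu}{\mu-2}=b_1$, and then taking the $b_1$-th root is what makes the constants benign: since $q\sim b_0\sim b_1$ and $b_0/b_1\le\frac{\mu-2}{\mu}<1$, the polynomial factor $b_0^3$, the exponential factor $exp(c_{12}b_0)$ and the geometric factor $\bar{C}^{\,q}$ all collapse into one constant $C=C(n,p)$ after the root. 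This yields $\lVert h\rVert_{L^{b_1}(B_o(\frac{3}{2}R))}\le C\,Vol^{\frac{1}{b_1}}(B_o(2R))\frac{b_0^2}{R^2}$, as claimed. The only genuinely delicate point is the annulus term of the previous paragraph; the rest is careful bookkeeping of the constants.
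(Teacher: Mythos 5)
Your proposal is correct and follows essentially the same route as the paper: specialize (\ref{3.13}) to $b=b_0$, absorb the reaction term into the positive $h^{\frac{p}{2}+b_0+1}$ term (the paper does this by splitting the domain at $h\sim b_0^2/R^2$, which is equivalent to your Young's inequality $h^q\le\epsilon h^{q+1}+\epsilon^{-q}$), handle the cutoff-gradient term with exactly the adapted cutoff $\eta=\eta_1^{\frac{p}{2}+b_0+1}$ plus H\"older/Young, and let the $b_1$-th root tame the factors $C^{b_0}$, $b_0^3$ and $exp(c_{12}b_0)$. The only difference is cosmetic (pointwise weighted Young versus H\"older on the integral followed by Young), and your identification of the annulus term as the delicate step matches where the paper expends its effort.
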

\begin{proof}
	By observation we have
	\[c_{13}b_0^2bh^{\frac{p}{2}+b}<\frac{1}{2}c_{11}bR^2h^{\frac{p}{2}+b+1},\]
	if
	\[h>\frac{2c_{13}b_0^2}{c_{11}R^2}.\]
	Thus by decomposing (\ref{3.13}) and letting $b=b_0$ we can obtain that
	\begin{equation}\label{3.14}
		\begin{aligned}
				&\biggl(\int_{B_o(2R)}h^{(\frac{p}{2}+b_0)\frac{\mu}{\mu-2}}\eta^{\frac{2\mu}{\mu-2}}\biggr)^\frac{\mu-2}{\mu}+\frac{c_{11}b_0R^2exp(c_{12}b_0)}{2Vol^{\frac{2}{\mu}}(B_o(2R))}\int_{B_o(2R)}h^{\frac{p}{2}+b_0+1}\eta^2\\
				&\leq c_{14}R^2\frac{exp(c_{12}b_0)}{Vol^{\frac{2}{\mu}}(B_o(2R))}\int_{B_o(2R)}h^{\frac{p}{2}+b_0}\lvert\nabla\eta\rvert^2\\
				&+c_{15}^{b_0}b_0^3exp(c_{12}b_0)(\frac{b_0}{R})^{p+2b_0}Vol^{1-\frac{2}{\mu}}(B_o(2R)).
		\end{aligned}
	\end{equation}
	Let $\eta_1\in C^{\infty}_0B_o(2R)$ such that
	\begin{equation*}
		0\leq\eta_1\leq 1,\ \eta_1=1\ in\ B_o(\frac{3R}{2}),\ \lvert\nabla\eta_1\rvert\leq \frac{C}{R}.
	\end{equation*}
	Then for $\eta=\eta_1^{\frac{p}{2}+b_0+1}$, we have
	\begin{equation*}
		\begin{aligned}
			&c_{14}R^2\frac{exp(c_{12}b_0)}{Vol^{\frac{2}{\mu}}(B_o(2R))}\int_{B_o(2R)}h^{\frac{p}{2}+b_0}\lvert\nabla\eta\rvert^2\\
			\leq& c_{16}b_0^2\frac{exp(c_{12}b_0)}{Vol^{\frac{2}{\mu}}(B_o(2R))}\int_{B_o(2R)}h^{\frac{p}{2}+b_0}\eta^{\frac{p+2b_0}{\frac{p}{2}+b_0+1}}\\
			\leq&c_{16}b_0^2\frac{exp(c_{12}b_0)}{Vol^{\frac{2}{\mu}}(B_o(2R))}\biggl(\int_{B_o(2R)}h^{\frac{p}{2}+b_0+1}\eta^2\biggr)^{\frac{\frac{p}{2}+b_0}{\frac{p}{2}+b_0+1}}Vol^{\frac{1}{\frac{p}{2}+b_0+1}}(B_o(2R))\\
			\leq& \frac{c_{11}b_0R^2exp(c_{12}b_0)}{2Vol^{\frac{2}{\mu}}(B_o(2R))}\int_{B_o(2R)}h^{\frac{p}{2}+b_0+1}\eta^2+\frac{c_{17}^{b_0}b_0^{\frac{p}{2}+b_0+2}exp(c_{12}b_0)}{R^{p+2b_0}}Vol^{1-\frac{2}{\mu}}(B_o(2R)).
		\end{aligned}
	\end{equation*}
	Substituting it into (\ref{3.14}), then we can obtain
	\begin{equation}\label{3.15}
		(\int_{B_o(\frac{3}{2}R)}h^{(\frac{p}{2}+b_0)\frac{\mu}{\mu-2}}\eta^{\frac{2\mu}{\mu-2}})^\frac{\mu-2}{\mu}\leq C^{b_0}b_0^3exp(c_{12}b_0)(\frac{b_0}{R})^{p+2b_0}Vol^{1-\frac{2}{\mu}}(B_o(2R)).
	\end{equation} 
	Since $b_1=(b_0+\frac{p}{2})\frac{\mu}{\mu-2}$, we can get from (\ref{3.15}) that
	\begin{equation*}
		\lVert h\rVert_{L^{b_1}(B_o(\frac{3}{2}R))}\leq CVol^{\frac{1}{b_1}}(B_o(2R))\frac{b_0^2}{R^2}.
	\end{equation*}
\end{proof}
\qquad Now we can give the proof of Theorem \ref{th1.2}. 
\begin{proof}
	By ignoring the second term on the left hand side of (\ref{3.13}) we get
	\begin{equation}\label{3.16}
		\begin{aligned}
			\biggl(\int_{B_o(2R)}h^{(\frac{p}{2}+b)\frac{\mu}{\mu-2}}\eta^{\frac{2\mu}{\mu-2}}\biggr)^\frac{\mu-2}{\mu}\leq& c_{13}b_0^2b\frac{exp(c_{12}b_0)}{Vol^{\frac{2}{\mu}}(B_o(2R))}\int_{B_o(2R)}h^{\frac{p}{2}+b}\eta^2\\
			&+c_{14}R^2\frac{exp(c_{12}b_0)}{Vol^{\frac{2}{\mu}}(B_o(2R))}\int_{B_o(2R)}h^{\frac{p}{2}+b}\lvert\nabla\eta\rvert^2.
		\end{aligned}
	\end{equation}
	Denoting
	\[b_{i+1}=\frac{\mu}{\mu-2}b_i,\ R_i=R+\frac{2R}{4^i},\]
	and letting $\{\eta_i\}$ be cutoff functions such that
	\begin{equation*}
		0\leq\eta_i\leq1,\ \eta_i\in C^{\infty}_0(B_o(R_i)),\ \eta_i=1\ in\ B_o(R_{i+1}),\ \lvert\nabla\eta_i\rvert\leq\frac{C\cdot4^i}{R}.
	\end{equation*}
	Then by letting $\frac{p}{2}+b=b_i$ and $\eta=\eta_i$ in (\ref{3.16}) we obtain
	\begin{equation*}
		\begin{aligned}
			\biggl(\int_{B_o(R_{i+1})}h^{b_{i+1}}\biggr)^{\frac{1}{b_{i+1}}}\leq&\Bigl(\frac{c_{18}exp(c_{12}b_0)}{Vol^{\frac{2}{\mu}}(B_o(2R))}\Bigr)^{\frac{1}{b_i}}\biggl(\int_{B_o(R_{i})}\Bigl(b_0^2b_ih^{b_i}+R^2h^{b_i}\lvert\nabla\eta_i\rvert^2\Bigr)\biggr)^{\frac{1}{b_i}}
		\end{aligned}
	\end{equation*}
	The assumption of $\eta_i$ gives
	\begin{equation*}
		\lVert h\rVert_{L^{b_{i+1}}(B_o(R_{i+1}))}\leq \Bigl(\frac{c_{19}exp(c_{12}b_0)}{Vol^{\frac{2}{\mu}}(B_o(2R))}\Bigr)^{\frac{1}{b_i}}(b_0^2b_i+16^i)^{\frac{1}{b_i}}\lVert h\rVert _{L^{b_i}(B_o(R_i))}.
	\end{equation*}
	From direct computation we have 
	\begin{equation*}
		\sum_{i=1}^{\infty}\frac{1}{b_i}= \frac{\mu}{2b_1},\ \sum_{i=1}^{\infty}\frac{i}{b_i}=\frac{\mu^2}{4b_1}.
	\end{equation*}
	Therefore the iterative process yields
	\begin{equation}\label{3.17}
		\begin{aligned}
			\lVert h\rVert_{L^{\infty}(B_o(R))}\leq&\Bigl(\frac{c_{19}exp(c_{12}b_0)}{Vol^{\frac{2}{\mu}}(B_o(2R))}\Bigr)^{\sum_{i=1}^{\infty}\frac{1}{b_i}}\prod_{i=1}^{\infty}(b_0^2b_i+16^i)^{\frac{1}{b_i}}\lVert h\rVert_{L^{b_1}(B_o(\frac{3R}{2}))}\\
			\leq&\frac{c_{20}}{Vol^{\frac{1}{b_1}}(B_o(2R))}\lVert h\rVert_{L^{b_1}(B_o(\frac{3R}{2}))}.
		\end{aligned}
	\end{equation}
	By combining Lemma \ref{le3.2} and (\ref{3.17}) we get
	\[\lvert\nabla w\rvert^2=\lvert h\rvert\leq \frac{c_{21}b_0^2}{R^2},\]
	For any $x\in B_o(R)$. This completes the proof.
\end{proof}
\section{Global explicit estimate} 
\qquad In this section we give the proof of Theorem \ref{th1.4}. Without confusion, we use the same notations as in Section 3. 

\qquad We denote the linearized operator of $\Delta_{p,V}$ at $w$ by $\mathfrak{L}$, i.e.,
\begin{equation*}
	\begin{aligned}
		\mathfrak{L}\varphi=div (\lvert\nabla w\rvert^{p-2}A(\nabla\varphi))+\lvert\nabla w\rvert^{p-2}\langle V,A(\nabla \varphi)\rangle,
	\end{aligned}
\end{equation*}
where 
\[A=id+(p-2)\frac{dw\otimes dw}{\lvert\nabla w\rvert^2}.\]

\qquad By direct computation we have the following lemma.
\begin{lemma}\label{le4.1}
	Let $(M^n,g)$ be a complete noncompact Riemannian manifold without boundary. Assume $u$ is a positive solution to (\ref{1.1}), $w=-(p-1)logu$, and $h=\lvert\nabla w\rvert^2$. Then for any sufficient large constant $\alpha> 1$, we have
	\begin{equation}\label{4.1}
		\begin{aligned}
			\mathfrak{L}(h^\alpha)=&\alpha(\alpha+\frac{p}{2}-2)h^{\alpha+\frac{p}{2}-3}\lvert\nabla h\rvert^2+2\alpha h^{\alpha+\frac{p}{2}-2}(\lvert Hessw\rvert^2+Ric_V(\nabla w,\nabla w))\\
			&+2\alpha h^{\alpha-1}\langle\nabla\Delta_{p,V}w,\nabla w\rangle+\alpha(\alpha-1)(p-2)h^{\alpha+\frac{p}{2}-4}\langle\nabla w,\nabla h\rangle^2.
		\end{aligned}
	\end{equation}
\end{lemma}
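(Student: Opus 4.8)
The plan is to establish (\ref{4.1}) by a direct computation, substituting $\varphi=h^\alpha$ into the divergence-form expression for $\mathfrak{L}$ and then reorganizing the resulting terms with the $V$-Bochner formula (\ref{3.3}) and the non-divergence expansion of $\Delta_{p,V}w$. First I would compute $A(\nabla h^\alpha)$: since $\nabla h^\alpha=\alpha h^{\alpha-1}\nabla h$, the definition of $A$ gives $A(\nabla h^\alpha)=\alpha h^{\alpha-1}\nabla h+(p-2)\alpha h^{\alpha-2}\langle\nabla w,\nabla h\rangle\nabla w$, so that
\[
\lvert\nabla w\rvert^{p-2}A(\nabla h^\alpha)=\alpha h^{\alpha+\frac{p}{2}-2}\nabla h+(p-2)\alpha h^{\alpha+\frac{p}{2}-3}\langle\nabla w,\nabla h\rangle\nabla w.
\]

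Next I would take the divergence of this vector field by the Leibniz rule and add the contraction term $\lvert\nabla w\rvert^{p-2}\langle V,A(\nabla h^\alpha)\rangle$. Differentiating the first summand yields $\alpha(\alpha+\frac{p}{2}-2)h^{\alpha+\frac{p}{2}-3}\lvert\nabla h\rvert^2+\alpha h^{\alpha+\frac{p}{2}-2}\Delta h$, whose gradient part is already the first term of (\ref{4.1}); combining its $\Delta h$ piece with the matching $V$-contribution $\alpha h^{\alpha+\frac{p}{2}-2}\langle V,\nabla h\rangle$ packages all top-order contributions into $\alpha h^{\alpha+\frac{p}{2}-2}\Delta_V h$, where $\Delta_V=\Delta+\langle V,\nabla\,\cdot\,\rangle$. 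Differentiating the second summand and adding its $V$-contribution produces the ``transport'' terms $(p-2)\alpha(\alpha+\frac{p}{2}-3)h^{\alpha+\frac{p}{2}-4}\langle\nabla w,\nabla h\rangle^2$, $(p-2)\alpha h^{\alpha+\frac{p}{2}-3}\langle\nabla w,\nabla\langle\nabla w,\nabla h\rangle\rangle$, and $(p-2)\alpha h^{\alpha+\frac{p}{2}-3}\langle\nabla w,\nabla h\rangle\Delta_V w$.

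I would then apply the $V$-Bochner formula (\ref{3.3}) to $w$, namely $\tfrac12\Delta_V h=\lvert Hessw\rvert^2+\langle\nabla w,\nabla\Delta_V w\rangle+Ric_V(\nabla w,\nabla w)$, to rewrite $\alpha h^{\alpha+\frac{p}{2}-2}\Delta_V h$. This immediately delivers the term $2\alpha h^{\alpha+\frac{p}{2}-2}\bigl(\lvert Hessw\rvert^2+Ric_V(\nabla w,\nabla w)\bigr)$ of (\ref{4.1}) and leaves behind a further transport term $2\alpha h^{\alpha+\frac{p}{2}-2}\langle\nabla w,\nabla\Delta_V w\rangle$.

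The remaining step, which I expect to be the only delicate bookkeeping, is to recognize that the accumulated transport terms reassemble into $2\alpha h^{\alpha-1}\langle\nabla\Delta_{p,V}w,\nabla w\rangle$. To see this I would read off from (\ref{2.1.1}) the non-divergence form $\Delta_{p,V}w=h^{\frac{p}{2}-1}\Delta_V w+(\frac{p}{2}-1)h^{\frac{p}{2}-2}\langle\nabla w,\nabla h\rangle$, differentiate it, and contract with $\nabla w$; using $2(\frac{p}{2}-1)=p-2$ repeatedly, the result matches the $\langle\nabla w,\nabla\Delta_V w\rangle$, $\langle\nabla w,\nabla h\rangle\Delta_V w$ and $\langle\nabla w,\nabla\langle\nabla w,\nabla h\rangle\rangle$ terms exactly, at the cost of a residual $(p-2)\alpha(\frac{p}{2}-2)h^{\alpha+\frac{p}{2}-4}\langle\nabla w,\nabla h\rangle^2$. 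Subtracting this residue from the coefficient $(p-2)\alpha(\alpha+\frac{p}{2}-3)$ produced in the direct expansion leaves precisely $(p-2)\alpha(\alpha-1)$, the last term of (\ref{4.1}). The main obstacle is thus purely the careful tracking of the quartic gradient terms $\langle\nabla w,\nabla h\rangle^2$ and $\langle\nabla w,\nabla\langle\nabla w,\nabla h\rangle\rangle$, which occur both in the direct divergence expansion and in $\langle\nabla\Delta_{p,V}w,\nabla w\rangle$, and verifying that their coefficients reconcile to give (\ref{4.1}).
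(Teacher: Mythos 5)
Your proposal is correct and follows essentially the same route as the paper: expand $\mathfrak{L}(h^\alpha)$ by the Leibniz rule into $\Delta_V h$, $\Delta_V w$ and transport terms, apply the $V$-Bochner formula (\ref{3.3}), and absorb the remaining terms into $2\alpha h^{\alpha-1}\langle\nabla\Delta_{p,V}w,\nabla w\rangle$ via the non-divergence expansion of $\Delta_{p,V}w$, which is exactly the paper's identity (\ref{4.3}). Your coefficient bookkeeping, including the cancellation $(\alpha+\frac{p}{2}-3)-(\frac{p}{2}-2)=\alpha-1$ yielding the final $\alpha(\alpha-1)(p-2)$ term, checks out.
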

\begin{proof}
	From the definition of $\mathfrak{L}$ we have
	\begin{equation}\label{4.2}
		\begin{aligned}
			\mathfrak{L}(h^\alpha)=&div(h^{\frac{p}{2}-1}\nabla h^\alpha)+h^{\frac{p}{2}-1}\langle V,\nabla h^\alpha\rangle+(p-2)div(h^{\frac{p}{2}-2}\langle \nabla w,\nabla h^\alpha\rangle\nabla w)\\
			&+(p-2)h^{\frac{p}{2}-2}\langle\nabla w,\nabla h^\alpha\rangle\langle V,\nabla w\rangle\\
			=&\alpha(\alpha+\frac{p}{2}-2)h^{\alpha+\frac{p}{2}-3}\lvert\nabla h\rvert^2+\alpha h^{\alpha+\frac{p}{2}-2}\Delta_V h\\
			&+\alpha(p-2)h^{\alpha+\frac{p}{2}-3}\langle \nabla h,\nabla w\rangle\Delta_Vw+\alpha(p-2)(\alpha+\frac{p}{2}-3)h^{\alpha+\frac{p}{2}-4}\langle\nabla w,\nabla h\rangle^2\\
			&+\alpha(p-2)h^{\alpha+\frac{p}{2}-3}\langle\nabla w,\nabla\langle\nabla w,\nabla h\rangle\rangle.
		\end{aligned}
	\end{equation}
	And we can infer from (\ref{2.1.1}) that
	\begin{equation}\label{4.3}
		\begin{aligned}
			\langle\nabla\Delta_{p,V}w,\nabla w\rangle=&(\frac{p}{2}-1)(\frac{p}{2}-2)h^{\frac{p}{2}-3}\langle\nabla w,\nabla h\rangle^2+(\frac{p}{2}-1)h^{\frac{p}{2}-2}\langle\nabla w,\nabla\langle \nabla w,\nabla h\rangle\rangle\\
			&+(\frac{p}{2}-1)h^{\frac{p}{2}-2}\langle\nabla w,\nabla h\rangle\Delta_Vw+h^{\frac{p}{2}-1}\langle\nabla\Delta_Vw,\nabla w\rangle.
		\end{aligned}
	\end{equation}
	Combining (\ref{4.2}), (\ref{4.3}), and the Bochner formula for the $V$-Laplace operator (\ref{3.3}), we obtain (\ref{4.1}).
\end{proof}
\qquad With a similar argument to that in \cite{HanWinterWang25}, we get the following estimate.
\begin{lemma}\label{le4.2}
	Under the same assumption as in Lemma \ref{le4.1}. Assume the Bakry-$\acute{E}$mery curvature satisfies $Ric_V\geq -(n-1)K$ on $M$, and $\lvert V\rvert\leq\theta$. Then for $\alpha\geq1$ sufficient large, on the point where $h$ is strictly positive, we have
	\begin{equation}\label{4.4}
		\begin{aligned}
			\mathfrak{L}(h^\alpha)\geq&\frac{2\alpha}{2n-1}h^{\alpha+\frac{p}{2}}-2\alpha\bigl((n-1)K+\frac{\theta^2}{n}\bigr)h^{\alpha+\frac{p}{2}-1}\\
			&-\alpha\bigl(\frac{2(p-1)}{2n-1}+p\bigr)h^{\alpha+\frac{p}{2}-\frac{3}{2}}\lvert\nabla h\rvert.
		\end{aligned}
	\end{equation}
\end{lemma}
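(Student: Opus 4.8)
The plan is to start from the identity for $\mathfrak{L}(h^\alpha)$ in Lemma \ref{le4.1} and bound each of its four terms from below, using the equation (\ref{3.1}), the pointwise Hessian estimate (\ref{3.5.2}), and the curvature hypothesis. First I would use that $w$ solves (\ref{3.1}), so $\Delta_{p,V}w=h^{\frac{p}{2}}$ and hence $\langle\nabla\Delta_{p,V}w,\nabla w\rangle=\frac{p}{2}h^{\frac{p}{2}-1}\langle\nabla h,\nabla w\rangle$; this converts the third term $2\alpha h^{\alpha-1}\langle\nabla\Delta_{p,V}w,\nabla w\rangle$ into $\alpha p\,h^{\alpha+\frac{p}{2}-2}\langle\nabla h,\nabla w\rangle$. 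For the curvature term I would invoke $Ric_V\geq-(n-1)K$ to get $2\alpha h^{\alpha+\frac{p}{2}-2}Ric_V(\nabla w,\nabla w)\geq-2\alpha(n-1)K\,h^{\alpha+\frac{p}{2}-1}$.

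The central step is the Hessian term. I would substitute (\ref{3.5.2}) into $2\alpha h^{\alpha+\frac{p}{2}-2}\lvert Hessw\rvert^2$, using the frame identities $\langle\nabla w,\nabla h\rangle=2hw_{11}$ and $\sum_i w_{1i}^2=\frac{\lvert\nabla h\rvert^2}{4h}$, together with $\langle V,\nabla w\rangle^2\leq\theta^2 h$. This yields the leading positive term $\frac{2\alpha}{2n-1}h^{\alpha+\frac{p}{2}}$, the contribution $-\frac{2\alpha\theta^2}{n}h^{\alpha+\frac{p}{2}-1}$ (which combines with the $Ric_V$ term to give precisely $-2\alpha((n-1)K+\frac{\theta^2}{n})h^{\alpha+\frac{p}{2}-1}$), a nonnegative multiple of $h^{\alpha+\frac{p}{2}-3}\lvert\nabla h\rvert^2$, and a cross term $-\frac{2\alpha(p-1)}{2n-1}h^{\alpha+\frac{p}{2}-2}\langle\nabla w,\nabla h\rangle$.

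I would then collect the three gradient-squared contributions: the $c_1$-piece from the Hessian, the first term $\alpha(\alpha+\frac{p}{2}-2)h^{\alpha+\frac{p}{2}-3}\lvert\nabla h\rvert^2$ of Lemma \ref{le4.1}, and the last term $\alpha(\alpha-1)(p-2)h^{\alpha+\frac{p}{2}-4}\langle\nabla w,\nabla h\rangle^2$. When $p<2$ the final term is negative, so I would bound it via Cauchy--Schwarz, $\langle\nabla w,\nabla h\rangle^2\leq h\lvert\nabla h\rvert^2$, reducing the sum to $\alpha\bigl(\alpha(p-1)-\frac{p}{2}\bigr)h^{\alpha+\frac{p}{2}-3}\lvert\nabla h\rvert^2$ plus the nonnegative $c_1$-piece; since $p>1$, this is nonnegative once $\alpha$ is large, so all gradient-squared terms may be discarded. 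Finally, the two terms linear in $\langle\nabla w,\nabla h\rangle$, namely $\alpha p$ and $-\frac{2\alpha(p-1)}{2n-1}$ times $h^{\alpha+\frac{p}{2}-2}\langle\nabla w,\nabla h\rangle$, are each estimated below by $\lvert\langle\nabla w,\nabla h\rangle\rvert\leq h^{\frac{1}{2}}\lvert\nabla h\rvert$, producing exactly $-\alpha\bigl(\frac{2(p-1)}{2n-1}+p\bigr)h^{\alpha+\frac{p}{2}-\frac{3}{2}}\lvert\nabla h\rvert$ and establishing (\ref{4.4}).

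I expect the main obstacle to be confirming that the gradient-squared terms are genuinely nonnegative for large $\alpha$ in the regime $1<p<2$, where the sign of $(p-2)$ forces the Cauchy--Schwarz reduction above; the bookkeeping of which cross terms to push toward $+h^{\frac{1}{2}}\lvert\nabla h\rvert$ versus $-h^{\frac{1}{2}}\lvert\nabla h\rvert$, dictated by the sign of each coefficient, is the only delicate point, everything else being a direct substitution.
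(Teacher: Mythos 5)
Your proposal is correct and follows essentially the same route as the paper: convert the $\langle\nabla\Delta_{p,V}w,\nabla w\rangle$ term via (\ref{3.1}), substitute the Hessian lower bound, absorb the curvature and $\langle V,\nabla w\rangle^2$ terms, discard the nonnegative gradient-squared contributions, and bound the two cross terms by $h^{1/2}\lvert\nabla h\rvert$. The only cosmetic difference is that the paper verifies nonnegativity of the gradient-squared block by decomposing into the frame components $w_{11}^2$ and $\sum_{i\geq2}w_{1i}^2$ (obtaining the coefficients $(2\alpha-1)(p-1)+\frac{(p-1)^2}{2n-1}$ and $2(\alpha+\frac{p}{2}-1)$), whereas you use the equivalent Cauchy--Schwarz bound $\langle\nabla w,\nabla h\rangle^2\leq h\lvert\nabla h\rvert^2$ to reach $\alpha(p-1)-\frac{p}{2}\geq0$ for large $\alpha$; both are valid under the stated hypothesis.
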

\begin{proof}
	For any $x$ in $\{h(x)>0\}$, we denote an orthonormal frame $\{e_i\}_{i=1}^n$ at $x$, such that $\nabla w=\lvert\nabla w\rvert e_1$, then we get
	\[2hw_{11}=\langle\nabla w,\nabla h\rangle,\]
	\[\frac{\lvert\nabla h\rvert^2}{h}=4\sum_{i=1}^{n}w_{1i}^2,\]
	\[h=(p-1)w_{11}+\sum_{i=2}w_{ii}+\langle V,\nabla w\rangle.\]
	\qquad From Lemma \ref{le4.1} we have
	\begin{equation}\label{4.5}
		\begin{aligned}
			\frac{h^{2-\alpha-\frac{p}{2}}}{2\alpha}\mathfrak{L}(h^\alpha)=&\frac{1}{2}(\alpha+\frac{p}{2}-2)\frac{\lvert\nabla h\rvert^2}{h}+\lvert Hessw\rvert^2+Ric_V(\nabla w,\nabla w)\\
			&+\frac{1}{2}(\alpha-1)(p-2)\frac{\langle\nabla w,\nabla h\rangle^2}{h^2}+h^{1-\frac{p}{2}}\langle\nabla\Delta_{p,V}w,\nabla w\rangle\\
			=&\lvert Hessw\rvert^2+Ric_V(\nabla w,\nabla w)+2(\alpha+\frac{p}{2}-2)\sum_{i=1}^nw_{1i}^2\\
			&+2(\alpha-1)(p-2)w_{11}^2+h^{1-\frac{p}{2}}\langle\nabla\Delta_{p,V}w,\nabla w\rangle.
		\end{aligned}
	\end{equation}
	For the Hessian term, similar to (\ref{3.5.2}), we have
	\begin{equation}\label{4.6}
		\begin{aligned}
			\lvert Hessw\rvert^2
			\geq&\frac{h^2}{2n-1}-\frac{(p-1)\langle\nabla w,\nabla h\rangle}{2n-1}-\frac{\theta^2h}{n}\\
			&+\bigl(1+\frac{(p-1)^2}{2n-1}\bigr)w_{11}^2+2\sum_{i=2}^nw_{1i}^2.
		\end{aligned}
	\end{equation}
	The last term on the right hand side of (\ref{4.5}) can be represented as
	\begin{equation}\label{4.7}
		\begin{aligned}
			h^{1-\frac{p}{2}}\langle\nabla\Delta_{p,V}w,\nabla w\rangle=&h^{1-\frac{p}{2}}\langle\nabla h^{\frac{p}{2}},\nabla w\rangle\\
			=&\frac{p}{2}\langle\nabla w,\nabla h\rangle.
		\end{aligned}
	\end{equation}
	Combining (\ref{4.5})-(\ref{4.7}), with the curvature assumption we obtain
	\begin{equation*}
		\begin{aligned}
			\frac{h^{2-\alpha-\frac{p}{2}}}{2\alpha}\mathfrak{L}(h^\alpha)\geq&\frac{h^2}{2n-1}-\frac{p-1}{2n-1}\langle\nabla w,\nabla h\rangle-\frac{\theta^2h}{n}-(n-1)Kh\\
			&+2(\alpha+\frac{p}{2}-2)\sum_{i=2}^nw_{1i}^2+2(\alpha-1)(p-2)w_{11}^2\\
			&+\frac{p}{2}\langle\nabla w,\nabla h\rangle+\bigl(1+\frac{(p-1)^2}{2n-1}\bigr)w_{11}^2+2\sum_{i=2}^nw_{1i}^2\\
			&+2(\alpha+\frac{p}{2}-2)w_{11}^2\\
			\geq&\frac{h^2}{2n-1}-\bigl((n-1)K+\frac{\theta^2}{n}\bigr)h-(\frac{p-1}{2n-1}+\frac{p}{2})h^{\frac{1}{2}}\lvert\nabla h\rvert\\
			&+\bigl((2\alpha-1)(p-1)+\frac{(p-1)^2}{2n-1}\bigr)w_{11}^2+2(\alpha+\frac{p}{2}-1)\sum_{i=2}^nw_{1i}^2\\
			\geq&\frac{h^2}{2n-1}-\bigl((n-1)K+\frac{\theta^2}{n}\bigr)h-(\frac{p-1}{2n-1}+\frac{p}{2})h^{\frac{1}{2}}\lvert\nabla h\rvert.
		\end{aligned}
	\end{equation*}
	This is the wanted inequality (\ref{4.4}), and Lemma \ref{le4.2} is proven.
\end{proof}
\qquad We denote 
\[H=\Bigl(h-(2n-1)\bigl((n-1)K+\frac{\theta^2}{n}\bigr)-\delta\Bigr)^+,\ \delta>0,\]
and
\[D=\{x\in M:H(x)>0\}.\]
Then we have the following estimate for $H$.
\begin{lemma}\label{le4.3}
	Under the same assumption as in Lemma \ref{le4.2}, for any $\delta>0$, there exist two constants $d_1$ and $d_2$ such that
	\begin{equation}\label{4.8}
		\mathfrak{L}(H^\alpha)\geq 2\alpha H^{\alpha-1}(d_1H-d_2\lvert\nabla H\rvert).
	\end{equation}
\end{lemma}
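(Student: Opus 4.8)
The plan is to mirror the computations of Lemma \ref{le4.1} and Lemma \ref{le4.2}, carried out on the open set $D=\{H>0\}$. On $D$ the quantity subtracted from $h$ in the definition of $H$ is the \emph{constant} $(2n-1)\bigl((n-1)K+\frac{\theta^2}{n}\bigr)+\delta$, so there $H$ is smooth and positive with $\nabla H=\nabla h$ and the same higher covariant derivatives as $h$; in particular $H^\alpha$ is a smooth function of $h$ on $D$. Since $\mathfrak{L}$ is built only from $\nabla w$, $h$ and their derivatives, the derivation of (\ref{4.1}) goes through verbatim with $h^\alpha$ replaced by $H^\alpha$, the only change being that each factor produced by differentiating $H^\alpha$ now carries a power of $H$ rather than $h$. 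Concretely, redoing the expansion (\ref{4.2})--(\ref{4.3}) with the Bochner formula (\ref{3.3}) and $\Delta_{p,V}w=h^{p/2}$ gives the analogue of (\ref{4.1}),
\begin{align*}
	\mathfrak{L}(H^\alpha)=&\,\alpha(\alpha-1)h^{\frac{p}{2}-1}H^{\alpha-2}\lvert\nabla h\rvert^2+\alpha(\tfrac{p}{2}-1)h^{\frac{p}{2}-2}H^{\alpha-1}\lvert\nabla h\rvert^2\\
	&+2\alpha h^{\frac{p}{2}-1}H^{\alpha-1}\bigl(\lvert Hessw\rvert^2+Ric_V(\nabla w,\nabla w)\bigr)\\
	&+\alpha(\alpha-1)(p-2)h^{\frac{p}{2}-2}H^{\alpha-2}\langle\nabla w,\nabla h\rangle^2+2\alpha H^{\alpha-1}\langle\nabla\Delta_{p,V}w,\nabla w\rangle.
\end{align*}

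Next I would discard the two terms carrying the factor $\alpha-1$: grouping them and using the frame identities $\langle\nabla w,\nabla h\rangle=2hw_{11}$, $\lvert\nabla h\rvert^2=4h\sum_i w_{1i}^2$ shows their sum equals a nonnegative multiple of $(p-1)w_{11}^2+\sum_{i=2}^n w_{1i}^2$, hence is discardable for $p>1$, $\alpha>1$. On the remaining expression, factoring out $2\alpha h^{\frac{p}{2}-1}H^{\alpha-1}$ reduces everything to the $\alpha=1$ form of the bracket in (\ref{4.5}), to which I apply the Hessian bound (\ref{4.6}), the hypothesis $Ric_V\geq-(n-1)K$, and the identity (\ref{4.7}), exactly as in Lemma \ref{le4.2}, again dropping the nonnegative $w_{11}^2$ and $w_{1i}^2$ contributions. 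Using $\frac{h}{2n-1}-\bigl((n-1)K+\frac{\theta^2}{n}\bigr)=\frac{1}{2n-1}(H+\delta)$ on $D$, this yields
\[
	\mathfrak{L}(H^\alpha)\geq 2\alpha H^{\alpha-1}\Bigl(\tfrac{h^{p/2}}{2n-1}(H+\delta)-\bigl(\tfrac{p-1}{2n-1}+\tfrac{p}{2}\bigr)h^{\frac{p-1}{2}}\lvert\nabla h\rvert\Bigr).
\]

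Finally, to replace the $h$-dependent coefficients by genuine constants I would invoke the a priori global bound obtained from Theorem \ref{th1.2} by letting $R\to\infty$, namely $h=\lvert\nabla w\rvert^2\leq\Lambda$ for some $\Lambda=\Lambda(n,p,K,\theta)$. On $D$ one also has $h>(2n-1)\bigl((n-1)K+\frac{\theta^2}{n}\bigr)+\delta>0$, so both $h^{p/2}$ and $h^{(p-1)/2}$ are pinched between fixed positive constants. Bounding $h^{p/2}$ below by $\bigl((2n-1)((n-1)K+\frac{\theta^2}{n})+\delta\bigr)^{p/2}$ in the first term, $h^{(p-1)/2}\leq\Lambda^{(p-1)/2}$ above in the second, and using $H+\delta\geq H$ together with $\lvert\nabla h\rvert=\lvert\nabla H\rvert$, produces (\ref{4.8}) with explicit positive $d_1,d_2$ depending only on $n,p,K,\theta,\delta$.

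The genuinely new point, beyond the bookkeeping shared with Lemmas \ref{le4.1}--\ref{le4.2}, is this last step: a \emph{constant} pair $d_1,d_2$ can be extracted only because $h$ is squeezed between two positive constants on $D$ — the lower squeeze being automatic from $H>0$, while the upper squeeze $h\le\Lambda$ is not free and must be imported from the already-established global estimate (Theorem \ref{th1.2} with $R\to\infty$); without it the coefficient $h^{(p-1)/2}$ of $\lvert\nabla h\rvert$ is unbounded for $p>1$. A minor technical care is that $H=(\cdots)^+$ is merely Lipschitz across $\partial D$, so all differential computations are confined to the open set $D$ where $H$ is smooth and strictly positive.
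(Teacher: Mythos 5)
Your proposal is correct and follows essentially the same route as the paper: both reduce $\mathfrak{L}(H^\alpha)$ to the Lemma \ref{le4.2} estimate by exploiting $\nabla H=\nabla h$ on $D$ and discarding the nonnegative $(\alpha-1)$-weighted terms (the paper packages this as $\mathfrak{L}(H^\alpha)\geq \frac{H^{\alpha-1}}{h^{\alpha-1}}\mathfrak{L}(h^\alpha)$ using positivity of $A$, while you re-expand directly), and both extract the constants $d_1,d_2$ from the two-sided pinching of $h$ on $D$ — bounded below by $(2n-1)\bigl((n-1)K+\frac{\theta^2}{n}\bigr)+\delta$ and above by the global bound from Section 3. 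The only point treated more carefully in the paper is the extension across $\partial D$, handled by taking $\alpha>2$ so the boundary distribution vanishes.
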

\begin{proof}
	From the definition of $H$ we have $\nabla h=\nabla H$ and $h>H$ on the set $D$, which gives
	\begin{equation*}
		\begin{aligned}
			\mathfrak{L}(H^\alpha)=&\alpha div(h^{\frac{p}{2}-1}H^{\alpha-1}A(\nabla H))+\alpha h^{\frac{p}{2}-1}H^{\alpha-1}\langle V,A(\nabla H)\rangle\\
			=&\alpha div(h^{\frac{p}{2}-1}H^{\alpha-1}A(\nabla h))+\alpha h^{\frac{p}{2}-1}H^{\alpha-1}\langle V,A(\nabla h)\rangle\\
			\geq&\alpha H^{\alpha-1}\bigl((\alpha-1)h^{-1}\langle\nabla h,h^{\frac{p}{2}-1}A(\nabla h)\rangle+\mathfrak{L}(h)\bigr)\\
			=&\frac{H^{\alpha-1}}{h^{\alpha-1}}\mathfrak{L}(h^\alpha).
		\end{aligned}
	\end{equation*}
	This implies
	\begin{equation}\label{4.9}
		\begin{aligned}
			\mathfrak{L}(H^\alpha)\geq&2\alpha H^{\alpha-1}h^{\frac{p}{2}-1}\Bigl(\frac{h^2}{2n-1}-\bigl((n-1)K+\frac{\theta^2}{n}\bigr)h-(\frac{p-1}{2n-1}+\frac{p}{2})h^{\frac{1}{2}}\lvert\nabla H\rvert\Bigr)
		\end{aligned}
	\end{equation}
	on the set $D$. By choosing $\alpha>2$, the distribution on $\partial D$ is eliminated, and (\ref{4.9}) holds on $\bar{D}$. From Section 3 we know that $h$ is uniformly bounded, therefore Lemma \ref{le4.3} holds from the fact $h\geq(2n-1)\bigl((n-1)K+\frac{\theta^2}{n}\bigr)+\delta$ on $\bar{D}$.
\end{proof}
\qquad Now we are ready to prove Theorem \ref{th1.4}.
\begin{proof}
	We only need to prove $H=0$ on $M$ since the constant $\delta$ is arbitrary. By the method of contradiction, we may assume $H\neq 0$ on $B_o(1)$ for some point $o\in M$. From Lemma \ref{le4.3}, (\ref{4.8}) holds in the sense of distribution. And from now on we use $d_i$ to represent constants. By choosing the test function $H^\sigma\eta^2$, where $\eta\in C_0^{\infty}(D)$ and $\sigma$ is a constant to be determined later, we have
	\begin{equation}\label{4.10}
		\begin{aligned}
			&\int_M2\alpha H^{\alpha+\sigma-1}\eta^2(d_1H-d_2\lvert\nabla H\rvert)\\
			\leq&\int_M\Bigl(h^{\frac{p}{2}-1}\bigl(-\alpha\sigma H^{\alpha+\sigma-2}\eta^2\lvert\nabla H\rvert^2-\alpha\sigma(p-2)h^{-1}H^{\alpha+\sigma-2}\eta^2\langle\nabla H,\nabla w\rangle^2\\
			&-2\alpha H^{\alpha+\sigma-1}\eta\langle\nabla H,\nabla\eta\rangle-2\alpha(p-2)h^{-1}H^{\alpha+\sigma-1}\eta\langle\nabla H,\nabla w\rangle\langle\nabla w,\nabla \eta\rangle\\
			&+\alpha H^{\alpha+\sigma-1}\eta^2\langle V,\nabla H\rangle+\alpha(p-2)h^{-1}H^{\alpha+\sigma-1}\eta^2\langle V,\nabla w\rangle\langle \nabla H,\nabla w\rangle\bigr)\Bigr).
		\end{aligned}
	\end{equation}
	The function $h$ is uniformly bounded from both sides on $\bar{D}$, and on $M\backslash D$ we have $H=0$. Therefore with bounded assumption of $V$ the inequality (\ref{4.10}) becomes
	\begin{equation*}
		\begin{aligned}
			&\int_M2H^{\alpha+\sigma-1}\eta^2(d_1H-d_3\lvert\nabla H\rvert)+\int_M \sigma d_4 H^{\alpha+\sigma-2}\eta^2\lvert\nabla H\rvert^2\\
			\leq&\int_M2d_5H^{\alpha+\sigma-1}\eta\lvert\nabla H\rvert\lvert\nabla\eta\rvert.
		\end{aligned}
	\end{equation*}
	By the Schwarz inequality, we have
	\begin{equation}\label{4.11}
		\begin{aligned}
			&\int_M\Bigl(2d_1H^{\alpha+\sigma}\eta^2+\sigma d_4 H^{\alpha+\sigma-2}\eta^2\lvert\nabla H\rvert^2\Bigr)\\
			&\leq\int_M\Bigl(\frac{d_3+d_5}{\epsilon}H^{\alpha+\sigma-2}\eta^2\lvert\nabla H\rvert^2+\epsilon d_5H^{\alpha+\sigma}\lvert\nabla\eta\rvert^2+\epsilon d_3H^{\alpha+\sigma}\eta^2\Bigr).
		\end{aligned}
	\end{equation}
	Choosing $\epsilon$ and $\sigma$ such that
	\[\sigma d_4\geq\frac{d_3+d_5}{\epsilon},\ \epsilon d_3\leq d_1.\]
	Then (\ref{4.11}) becomes
	\begin{equation*}
		\int_Md_1H^{\alpha+\sigma}\eta^2\leq\int_M\epsilon d_5H^{\alpha+\sigma}\lvert\nabla\eta\rvert^2.
	\end{equation*}
	For any $i\in \mathbb{N}$, we choose a cut-off function $\eta_i\in C^{\infty}_0(B_o(i+1))$ such that
	\[0\leq\eta_i\leq 1,\ ,\eta_i=1\ in\ B_o(i),\ \lvert\nabla\eta_i\rvert^2\leq10.\]
	Then we have
	\begin{equation*}
		d_1\int_{B_o(i)}H^{\alpha+\sigma}\leq10\epsilon d_5\int_{B_o(i+1)}H^{\alpha+\sigma}.
	\end{equation*}
	The iteration process leads to
	\begin{equation*}
		(\frac{d_6}{\epsilon})^{i-1}\int_{B_o(1)}H^{\alpha+\sigma}\leq \int_{B_o(i)}H^{\alpha+\sigma}.
	\end{equation*}
	Since $H$ is upper bounded, we can infer that
	\begin{equation*}
		Vol(B_o(i))\geq (\frac{d_7}{\epsilon})^{i-1},
	\end{equation*}
	for any $\epsilon$. This contradicts the volume comparison theorem, since we can choose $\sigma$ sufficient large to make $\epsilon$ sufficient small.
\end{proof}

\bibliography{bib}
\bibliographystyle{plain}

Yuxin Dong

School of Mathematical Science \& Laboratory of Mathematics for Nonlinear Science, Fudan University, Shanghai, 200433, PR China

\itshape{Email address}: yxdong@fudan.edu.cn\\

Hezi Lin

School of Mathematics and Statistics \& Key Laboratory of Analytical Mathematics and Applications (Ministry of Education) \& FJKLAMA, Fujian Normal University, Fuzhou, 350117, PR China

\itshape{Email address}: lhz1@fjnu.edu.cn\\

Weihao Zheng

School of Mathematical Science, Fudan University, Shanghai, 200433, PR China

\itshape{Email address}: whzheng20@fudan.edu.cn
\end{document}